\def\l@subsection{\@tocline{2}{0pt}{2.5pc}{5pc}{}}
\def\l@subsubsection{\@tocline{2}{0pt}{5pc}{7.5pc}{}}
\newtheorem*{MT}{Main Theorem}
\newtheorem{theorem}{Theorem}[section]
\newtheorem{corollary}[theorem]{Corollary}
\newtheorem{proposition}[theorem]{Proposition}
\newtheorem{lemma}[theorem]{Lemma}
\newtheorem*{ack}{Acknowledgements}
\newtheorem{remark}[theorem]{Remark}
\theoremstyle{definition}
\newtheorem{definition}[theorem]{Definition}
\numberwithin{equation}{section}
\newtheorem*{rep@theorem}{\rep@title}
\newcommand{\newreptheorem}[2]{%
\newenvironment{rep#1}[1]{%
 \def\rep@title{#2 \ref{##1}}%
 \begin{rep@theorem}}%
 {\end{rep@theorem}}}
\DeclareMathOperator{\rank}{rank}
\newcommand{\floor}[1]{\left\lfloor #1 \right\rfloor}
\newcommand{\CircMatrix}[1]{\text{circ}\{#1\}}
\newcommand{\IsotRed}[2]{\mathfrak{c}_{#1}^{#2}}
\newcommand{\IsotExp}[1]{\hat{[\partial_{#1}]}}
\newcommand{\IsotExpRow}[1]{[\partial_{#1}]^\text{row}}
\newcommand{\IndicesOfOrbit}[3]{{I_{#3}^{#1,#2}}}
\newcommand{\GBoundary}[1]{[\partial'_{#1}]^{S_\ell,T}}
\algrenewcommand\algorithmicrequire{\textbf{Input:}}
\algrenewcommand\algorithmicensure{\textbf{Output:}}
\begin{document}

\title{Computing homology of $\mathbb{Z}_k$-complexes from their quotients}

\author{Christine Escher}
\address{Department of Mathematics, Oregon State University, Corvallis, OR 97331}
\email{christine.escher@oregonstate.edu}

\author{Chad Giusti}
\address{Department of Mathematics, Oregon State University, Corvallis, OR 97331}
\email{chad.giusti@oregonstate.edu}

\author{Chung-Ping Lai}
\address{Department of Mathematics, Oregon State University, Corvallis, OR 97331}
\email{laich@oregonstate.edu }

\date{\today}

\begin{abstract}
In this paper, we investigate the question of how one can recover the homology of a simplicial complex $X$ equipped with a regular action of a finite group $G$ from the structure of its quotient space $X/G.$ Specifically, we describe a process for enriching the structure of the chain complex $C_\ast(X/G; \mathbb{F})$ using the data of a complex of groups, a framework developed by Bridson and Corsen for encoding the local structure of a group action. We interpret this data through the lens of matrix representations of the acting group, and combine this structure with the standard simplicial boundary matrices for $X/G$ to construct a surrogate chain complex. In the case $G = \mathbb{Z}_k,$ the group ring $\mathbb{F}G$ is commutative and matrices over $\mathbb{F}G$ admit a Smith normal form, allowing us to recover the homology of $G$ from this surrogate complex. This algebraic approach complements the geometric compression algorithm for equivariant simplicial complexes described by Carbone, Nanda, and Naqvi.
\end{abstract}

\maketitle


\section{Introduction}

In recent years, the concept of homology has gained traction in data analysis. The majority of applications of homology to applied problems involve the computation of homology for a simplicial complexes, which are often natural to define from data and thus can be directly interpreted in terms of model systems. However, even for modest data sets the resulting complexes are often inconveniently large, and so it is natural ask whether structure in the complex systems can be used to reduce the size of the computations needed to recover homology. One common kind of structure is symmetry, encoded as an action of a group $G$ on the complex. The study of equivariant homology of simplicial $G$-complexes is well-trod ground in algebraic topology \cite{BredonCompactTransformationGroups}, but many questions that are natural in the context of applications remain open. 

In this paper, inspired by recent work of Carbone, Nanda, and Naqvi \cite{CarboneNandaNaqvi}, we ask what information about a group action one must retain in order to reconstruct the homology of a finite regular $G$-complex from the homology of its quotient complex under the action of $G$. The necessary framework is that of a \emph{complex of groups}, independently developed by Haefliger \cite{Haefliger91} and Corson \cite{Corson92}, which generalizes Bass-Serre theory \cite{bass1993covering}. Combining this data with a suitable morphism, one can reconstruct the combinatorial structure of a finite simplicial $G$-complex \cite{BridsonHaefliger, carbone2013reconstructing}.  In \cite{CarboneNandaNaqvi} this idea is applied to give an explicit algorithm for compressing simplicial $G$-complexes \cite{CarboneNandaNaqvi}. Here, we develop an algebraic analog of this compression process in the context of finite cyclic group actions on a finite simplicial complex. This allows us to enrich the standard simplicial chain complex for the quotient complex using the data of the action to produce a surrogate chain complex over the group ring $\mathbb{F}G$, and, when $\mathbb{F}G$ admits a Smith normal form, from this data recover the $\mathbb{F}$-homology of the original $G$-complex. Our main result can be stated as follows,

\begin{MT} Let $\mathbb{F}$ be a field, $X$ be a regular $\mathbb{Z}_k$-complex, $\ell: X/\mathbb{Z}_k \to X$ a lift, and $d$ a non-negative integer. Let $\partial_d$ be the $d$-th boundary map in the chain complex $C_\ast(X; \mathbb{F}).$ Let $([\psi'_0],\dots,[\psi'_{n}])$ and $([\omega'_0],\dots,[\omega'_{m}])$ respectively be any $d$-th and a $(d-1)$-th ordered simplicial basis of $X/\mathbb{Z}_k$ and $[\partial'_d]^{S_\ell,T}$ be the $d$-th $\mathbb{Z}_k$-boundary matrix $[\partial_d']^{S_\ell,T}$ corresponding to $X$, $\ell$, and the ordered simplicial bases. Let $\alpha$ be any generator of $\mathbb{Z}_k$ and $\rho_\alpha$ the representation of $\mathbb{Z}_k$ corresponding to $\alpha$.  If $D$ be a smith normal form of $[\partial'_d]^{S_\ell,T}$, then $\rank (\partial_d) = \sum\limits_{i} \rank (\rho_\alpha(D_{i,i}))$.
\end{MT}

The paper is organized as follows. In \cref{sec: Preliminaries} we recall  simplicial $G$-complexes, complexes of groups, and results about the algebra of group rings. In \cref{sec: Compatible Matrix} we develop a framework for selecting ordered bases of the chain complex of $X/G$ that induce boundary matrices which will support the reconstruction of the homology of $X$. In \cref{sec: G-Boundary Matrix} we describe a procedure for compressing the data of the group action into an \emph{isotropy transfer triple}, a variant of a complex of groups which contains the additional information necessary for the homology reconstruction.  We also introduce the $G$-boundary matrix, an algebraic combination of the boundary matrix with this data. Finally, in \cref{sec: Rank Algorithm} we select a suitable representation of $\mathbb{Z}_k$ and relate the $G$-boundary matrix and the expanded boundary matrices introduced in \cref{sec: Compatible Matrix}. We use this relationship to state and prove the result stated above, \cref{Thm: Main}. Readers primarily interested in the main result but not the proof can bypass \cref{sec: Compatible Matrix} and \cref{Lemma: Expansion equals Representation}.

\begin{ack} A portion of this work draws from the PhD thesis of C.-P. Lai. C. Escher acknowledges support from the Simons Foundation (\#585481, C. Escher). C. Giusti is partially supported by the Air Force Office of Scientific Research under award number FA9550-21-1-0266.

\end{ack}

\section{Preliminaries}\label{sec: Preliminaries}

Throughout the paper, we assume all groups and simplicial complexes are finite. For a simplicial complex $X = (V, \Sigma),$ by abuse we will write $\psi \in X$ to mean $\psi \in \Sigma.$ All of our simplicial complexes will be \emph{oriented}, meaning that each simplex will be assigned a choice of ordering of its vertices up to parity.

\subsection{Simplicial \texorpdfstring{$G$}{G}-Complexes}
 Recall that a simplicial complex $X$ equipped with an action of a finite group $G$ is called a \emph{simplicial $G$-complex}, which we will shorten to \emph{$G$-complex} when unambiguous. 
 To orient the reader and set notation, in this section we will recall some basic notions and set notation regarding $G$-complexes and their quotients from Chapter 3 of Bredon \cite{BredonCompactTransformationGroups}.

Fix a finite group $G$ and let $X$ be a $G$-complex. For each simplex $\psi = \{v_0,\dots,v_d\}$ of $X$ and each group element $g\in G$,  write $g\psi := \{gv\mid v\in \psi\} = \{gv_0,\dots,gv_d\}$ for the image of $\psi$ under the $G$-action. 
To ensure that the orbit space of a $G$-complex $X$ is again a simplicial complex, additional assumptions must be made about $X$. 

\begin{definition}\label{Def:RegularSimplicialGComplex} Given a $G$-complex $X = (V, \Sigma)$, the action of $G$ on $X$ is \textbf{regular} and $X$ is called a \textbf{regular $G$-complex} if for every subgroup $H$ of $G$, for all $h_0,h_1,\dots,h_d \in H$, if $\{v_0,\dots,v_d\}$ and $\{h_0v_0,\dots,h_d v_d\}$ are both simplices of $X$, then there exists $h\in H$ such that $h(v_i) =h_i(v_i)$ for all $i=0,1,\dots,d$. That is, $h\{v_0,\dots,v_d\} =\{h_0v_0,\dots,h_d v_d\}$.
\end{definition}

Note that in the definition of regularity, the vertices $v_0,\dots,v_d$ need not be distinct. 

\begin{proposition}\label{Prop:RegularConditionA}
Let $G$ be a group and $X$ a  regular $G$-complex. 
For any $g\in G$ and $v \in X$, if $v$ and $gv$ belong to the same simplex $\psi \in X$, then $v=gv$.  That is, $g$ leaves $\psi \cap g\psi$ point-wise fixed for all $g \in G$.
\end{proposition}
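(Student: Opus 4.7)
The plan is to apply the regularity hypothesis to a judiciously chosen tuple of group elements that swaps $v$ and $gv$ in a single slot. Write $\psi = \{v_0,\dots,v_d\}$ and suppose for the sake of argument that $v = v_i$ and $gv = v_j$, where we want to show $v_i = v_j$. Let $H = \langle g \rangle$ and define $h_i := g$ and $h_k := e$ for all $k \neq i$. The multiset $\{h_0 v_0,\dots,h_d v_d\}$ then coincides, as a set, with $(\psi \setminus \{v_i\}) \cup \{v_j\}$, which is contained in $\psi$. If $v_i = v_j$ we are already done; otherwise this set equals $\psi \setminus \{v_i\}$, a codimension-one face of $\psi$ and hence a simplex of $X$.

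Now I would invoke \cref{Def:RegularSimplicialGComplex} to produce $h \in H$ with $h v_k = h_k v_k$ for every $k = 0,\dots,d$. In particular $h v_i = g v_i = v_j$ and $h v_j = v_j$, so $h v_i = h v_j$. Since $h$ acts as a bijection on the vertex set of $X$, this forces $v_i = v_j$, i.e., $v = gv$. This is the main claim.

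For the second assertion, I would argue as follows: if $w \in \psi \cap g\psi$, write $w = gv'$ with $v' \in \psi$; then both $v'$ and $gv'=w$ lie in the simplex $\psi$, so the first part yields $v' = gv'= w$, whence $gw = w$. Applying this for every $w \in \psi \cap g\psi$ shows $g$ fixes this intersection point-wise.

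The main obstacle I expect is step one in the outline above: checking that the constructed multiset $\{h_0 v_0,\dots,h_d v_d\}$ is genuinely a simplex of $X$ so that regularity can be applied. This is exactly the place where the convention allowing vertices in the regularity definition to be non-distinct matters, since the image collection has at most $d$ distinct elements rather than $d+1$. Once this subtlety is recognized and the multiset is identified with a face of $\psi$, the remainder of the argument is essentially forced by the injectivity of the group action on vertices.
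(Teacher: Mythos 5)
Your argument is correct and is essentially the standard one: the paper itself states \cref{Prop:RegularConditionA} without proof (recalling it from Bredon, Chapter III), and the proof there proceeds exactly as you do, applying the regularity condition to the tuple with $h_i = g$ and $h_k = e$ otherwise, noting that the resulting vertex set is the face $\psi \setminus \{v_i\}$ and concluding $v_i = v_j$ from the injectivity of the action of the resulting $h$. You correctly identified the one subtle point, namely that the image collection is a simplex only because repeated vertices are permitted in \cref{Def:RegularSimplicialGComplex}.
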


Not every $G$-complex is regular, but we can always construct a regular complex through successive barycentric subdivisions.  

\begin{proposition} Let $X$ be a $G$-complex and $B(X)$ its barycentric subdivision. Then the  induced action of $G$ on $B(B(X))$ is regular.
\end{proposition}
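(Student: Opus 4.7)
The plan is to verify the regularity condition for $B(B(X))$ directly, exploiting the fact that vertices of any simplex of $B(X)$ are simplices of $X$ of strictly increasing dimensions, while any simplicial $G$-action preserves these dimensions. Writing $Y := B(X)$, a vertex of $B(Y) = B(B(X))$ is itself a chain $\sigma_0 \subsetneq \cdots \subsetneq \sigma_k$ of simplices of $X$, and a simplex of $B(Y)$ is a chain of such chains ordered by inclusion. Given $H \leq G$, elements $h_0, \ldots, h_d \in H$, and vertices $u_0, \ldots, u_d$ of $B(Y)$ for which both $\{u_0, \ldots, u_d\}$ and $\{h_0 u_0, \ldots, h_d u_d\}$ are simplices of $B(Y)$, the task is to produce a single $h \in H$ with $h u_i = h_i u_i$ for all $i$.

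First I would list the distinct $u_i$'s as a chain $w_0 \subsetneq \cdots \subsetneq w_e$ in $Y$, and do the same for the images to get $w_0' \subsetneq \cdots \subsetneq w_{e'}'$. Since the $G$-action preserves chain length, and in any strict chain in $Y$ the members are totally ordered by their lengths as chains in $X$, one checks that $e' = e$ and $|w_j'| = |w_j|$ for all $j$. In particular, picking any $m$ with $u_m = w_e$, the image $h_m u_m$ equals $w_e'$, the top of the image chain, so $h_i u_i \subseteq h_m u_m$ for every $i$. The candidate will be $h := h_m$.

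The core verification is that $h_m u_i = h_i u_i$ for each $i$. For any $\sigma \in u_i$, the inclusions $u_i \subseteq u_m$ (as chains in $X$) and $h_i u_i \subseteq h_m u_m$ yield some $\tau \in u_m$ with $h_i \sigma = h_m \tau$. But $\dim \tau = \dim(h_m \tau) = \dim(h_i \sigma) = \dim \sigma$, and distinct elements of the strict chain $u_m$ have distinct dimensions, forcing $\tau = \sigma$. Hence $h_m \sigma = h_i \sigma$ for every $\sigma \in u_i$, which gives $h_m u_i = h_i u_i$ as required; repeated $u_i$'s are absorbed into the same bookkeeping since $u_i = u_j$ together with the dimension count forces $h_i u_i = h_j u_j$.

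The delicate step I expect to be the main obstacle is the dimension bookkeeping, confirming $e = e'$ and $|w_j| = |w_j'|$ and choosing the representative index $m$ unambiguously when the $u_i$'s repeat. It is worth noting that this argument collapses for a single subdivision: with $Y = X$, the vertices of $u_m$ would all share dimension zero, so the identification $\tau = \sigma$ would break, which is precisely the point at which the proposition needs two barycentric subdivisions rather than one.
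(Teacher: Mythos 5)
Your argument is correct, but note that the paper itself offers no proof of this proposition: it is recalled as a standard fact from Bredon's \emph{Introduction to Compact Transformation Groups}, so there is no in-paper argument to compare against. Your direct verification is essentially the classical one, and the steps all check out: a vertex of $B(B(X))$ is a flag of simplices of $X$ whose members are pairwise separated by dimension, the simplicial $G$-action preserves both the length of a flag and the dimension of each member, so the top element $u_m$ of the chain maps to the top element of the image chain, and the element $\tau \in u_m$ with $h_i\sigma = h_m\tau$ is forced by the dimension count to equal $\sigma$, giving $h_m u_i = h_i u_i$ with $h = h_m \in H$ (and the per-index nature of this argument already handles repeated $u_i$'s). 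Your closing observation is also the right sanity check: after only one subdivision the analogous comparison takes place among vertices of $X$, which are not separated by dimension, which is exactly why the proposition requires two subdivisions.
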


The regularity condition on a $G$-complex $X$ is precisely the information necessary to define a simplicial complex structure on the orbits of the simplices of $X.$ Writing $S/G$ for the $G$-orbits of the elements of a $G$-set $S$, that structure is given as follows.

\begin{proposition}\label{Prop: QuotientSimplicialComplex} Let $X=(V,\Sigma)$ be a regular $G$-complex. Then $X/G := (V',\Sigma')$, where $V' = V/G$ and $\Sigma':=\{\{Gv_0,\dots,Gv_d\}\mid \{v_d,\dots,v_d\}\in \Sigma\}$, is a simplicial complex. Moreover, the canonical orbit map $\pi :V\rightarrow V/G$ on $V$ induces a simplicial map $\pi:X\rightarrow X/G$. Finally, for any $d$-simplex $\psi'$ of $X/G$, $\pi^{-1}(\psi')$ is a $G$-orbit of $d$-simplices of $X$. In fact, the map $\Sigma'_d \rightarrow \Sigma_d/G,\,\psi'\mapsto \pi^{-1}(\psi')$ is injective. 
\end{proposition}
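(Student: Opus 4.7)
The plan is to verify each claim in turn, with regularity doing the key work at two places: ensuring that the putative simplices of $X/G$ actually have the right number of distinct vertices, and ensuring that the preimage of a simplex of $X/G$ is a single $G$-orbit rather than a union of several.

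First I would check that $X/G$ is a simplicial complex. The only nonobvious point is that if $\psi = \{v_0,\dots,v_d\} \in \Sigma$ is a $d$-simplex, then $\{Gv_0,\dots,Gv_d\}$ has exactly $d+1$ distinct elements. Suppose instead that $Gv_i = Gv_j$ for some $i \neq j$; then $v_j = g v_i$ for some $g \in G$, so $v_i$ and $gv_i$ both lie in the simplex $\psi$. By \cref{Prop:RegularConditionA}, $v_i = gv_i = v_j$, contradicting distinctness of the vertices of $\psi$. Closure under taking faces is immediate: a face of $\{Gv_0,\dots,Gv_d\}$ is the image of the corresponding face of $\psi$, which lies in $\Sigma$ by assumption.

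Next, the orbit map $\pi : V \to V/G$ carries a $d$-simplex $\{v_0,\dots,v_d\}$ to $\{Gv_0,\dots,Gv_d\}$, which lies in $\Sigma'$ by the very definition of $\Sigma'$; this is precisely the condition that $\pi$ be simplicial. Since $\pi$ is surjective on vertices, every simplex of $X/G$ arises this way, so the induced simplicial map $\pi : X \to X/G$ is well defined.

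For the preimage claim, fix a $d$-simplex $\psi' = \{Gv_0,\dots,Gv_d\} \in \Sigma'$ coming from some $\psi = \{v_0,\dots,v_d\} \in \Sigma$. The inclusion $G\psi \subseteq \pi^{-1}(\psi')$ is immediate since $\pi(g\psi) = \psi'$. For the reverse inclusion, suppose $\phi = \{u_0,\dots,u_d\}$ is a $d$-simplex with $\pi(\phi) = \psi'$. After reindexing, $Gu_i = Gv_i$ for each $i$, so there exist $g_i \in G$ with $u_i = g_i v_i$. Since both $\{v_0,\dots,v_d\}$ and $\{g_0 v_0,\dots,g_d v_d\} = \phi$ are simplices of $X$, the regularity hypothesis (\cref{Def:RegularSimplicialGComplex}) applied with $H = G$ produces some $g \in G$ with $g\psi = \phi$, so $\phi \in G\psi$. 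This is the step where the full strength of regularity is used and is the main (if modest) obstacle. Finally, the assignment $\psi' \mapsto \pi^{-1}(\psi')$ is injective on $\Sigma'_d$ because $\psi' = \pi(\pi^{-1}(\psi'))$, so distinct simplices of $X/G$ have distinct preimages.
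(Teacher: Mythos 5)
Your proof is correct. The paper itself states \cref{Prop: QuotientSimplicialComplex} without proof, recalling it (together with the surrounding material) from Chapter~3 of Bredon, so there is no in-paper argument to compare against; your write-up supplies the standard one, and you correctly identify the two places where regularity is essential: \cref{Prop:RegularConditionA} forces distinct vertices of a simplex to lie in distinct orbits (so $\pi$ preserves dimension, which also guarantees that every simplex in $\pi^{-1}(\psi')$ is a $d$-simplex, a point you use implicitly when you take $\phi$ to be a $d$-simplex), and the full statement of \cref{Def:RegularSimplicialGComplex} with $H=G$ collapses the vertex-wise translates $u_i = g_i v_i$ to a single group element, making the preimage a single orbit.
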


For a $G$-complex $X$, we define $X/G$ to be the \textbf{quotient complex of $X$} by the action of $G$ and $\pi$ to be the \textbf{quotient map.} Finally, a simplex $\psi\in \Sigma$ is then said to be \textbf{over} a simplex $\psi'\in \Sigma'$ when $\pi(\psi) =\psi'$. Throughout the article, we will denote vertices and simplices in the quotient complex $X/G = (V', \Sigma')$ using primes. 

We will also need the following properties of regular $G$-complexes.

\begin{definition}\label{Def: Isotropy subgroup of simplex} Let $X$ be a  $G$-complex and $\psi$ a simplex of $X$. Then 
$G_\psi = \{g\in G\mid g\psi =\psi\} $ is called 
 the \textbf{isotropy subgroup} of $\psi$. \footnote{In some references, the object in \cref{Def: Isotropy subgroup of simplex} is called the \emph{maximal} isotropy subgroup. See, for example, \cite{IllmanEquivariantTriangulation}.}\end{definition}

In the case of a regular $G$-complex any $g\in G_\psi$ fixes $\psi$ vertex-wise, see Proposition \ref{Prop:RegularConditionA}. 
This implies that for regular $G$-complexes many (but not all) alternative definitions of the isotropy subgroup associated to a simplex are equivalent. 

\medskip

Observe that if $X = (V, \Sigma)$ is a regular $G$-complex, and $\psi \subseteq \omega \in X$, then $G_\psi\leq G_\omega.$ Moreover, we show that if an element $g \in G$ leaves the face $\omega$ of a simplex $\psi$ invariant, that is,  $g \omega \subseteq \psi$, then $g$ must fix that face.

\begin{proposition}\label{Prop:OrbitNotAnotherFace}Let $X$ be a regular $G$-complex, and $\omega \subseteq \psi \in X$. Let $g\in G$ such that $g \omega \subseteq \psi$.  Then $g \in G_\omega.$
\end{proposition}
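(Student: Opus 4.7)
The plan is to reduce the claim to a vertex-by-vertex application of \cref{Prop:RegularConditionA}. Write $\omega = \{v_0, \dots, v_d\}$, so that $g\omega = \{gv_0, \dots, gv_d\}$. By hypothesis both $\omega \subseteq \psi$ and $g\omega \subseteq \psi$, hence for each index $i$ the vertices $v_i$ and $gv_i$ both belong to the single simplex $\psi$ of the regular $G$-complex $X$.

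Next, I would invoke \cref{Prop:RegularConditionA} directly with this $\psi$: it guarantees that whenever $v$ and $gv$ lie in a common simplex of $X$, they must coincide. Applying this once per $i$ yields $gv_i = v_i$ for $i = 0, \dots, d$. In particular, $g$ fixes every vertex of $\omega$, so $g\omega = \omega$, which is exactly the statement that $g \in G_\omega$.

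Because \cref{Prop:RegularConditionA} is already provided in the excerpt, the argument is essentially a one-line unpacking of the definitions and I do not anticipate a genuine obstacle. The only subtlety worth noting is that $g\omega \subseteq \psi$ guarantees $gv_i \in \psi$ (as a vertex of $\psi$), not merely that $gv_i$ lies in some simplex sharing structure with $\omega$; this is what lets us apply the regularity condition to the single simplex $\psi$. No assumption that the $v_i$ are distinct is needed, which matches the convention used earlier in the paper.
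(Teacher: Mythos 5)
Your proof is correct and takes essentially the same route as the paper: both arguments reduce the claim to a vertex-wise application of \cref{Prop:RegularConditionA}. The only cosmetic difference is that you invoke the ``$v$ and $gv$ in a common simplex implies $v=gv$'' formulation directly, whereas the paper first applies $g^{-1}$ to obtain $\omega \subseteq g^{-1}\psi \cap \psi$ and then uses the equivalent ``pointwise fixed'' formulation.
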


\begin{proof}
Let $X$ be a regular $G$-complex, $\omega\subseteq \psi \in X$ and $g\in G$. Applying $g^{-1}$ to both sides of $g\omega\subseteq\psi,$ we obtain $\omega \subseteq g^{-1}\psi$. Thus $\omega \subseteq g^{-1}\psi \cap \psi$. By  Proposition \ref{Prop:RegularConditionA}, $g^{-1}v = v$ for any $v\in g^{-1}\psi \cap \psi$, and thus $gv = v$. In particular $g\omega = \omega$, so $g \in G_\omega.$
\end{proof}

We introduce the following notation to identify individual elements in an orbit with left cosets of isotropy subgroups. The well-definedness of the definition follows from the canonical bijection $G/G_\psi \rightarrow G\psi$ between the left cosets of the isotropy subgroup in $G$ and the $G$-orbits.

\begin{definition}\label{Def: Coset act on simplex}Let $X$ be a regular $G$-complex and $\psi\in X$. Enumerate the cosets of $G_\psi$ in $G$ as $G/G_\psi = \{(gG_\psi)_1,\dots,(gG_\psi)_{|G/G_\psi|}\}$.  We define $(gG_\psi)_i\psi$ to be a simplex $h\psi \in G\psi$, where $h \in (gG_\psi)_i$.
\end{definition}

\subsection{Complexes of Groups}

Given a regular $G$-complex $X$, Bridson and Haefliger (\cite{BridsonHaefliger} Chapter III.C) describe how to recover $G$ from $X/G$ using a structure called a \emph{complex of groups} $\mathcal{A}$ over $X/G,$ along with a morphism $\Phi$ from $\mathcal{A}$ to a suitable \emph{constant} complex over $X/G.$ In \cite{CarboneNandaNaqvi}, this framework is reformulated to describe an algorithm that compresses a regular $G$-complex $X$ of data that is equivalent to $(\mathcal{A},\Phi)$, and another algorithm that reverses the process. In this section, we recall both constructions.

The construction of a complex of groups leverages the fact that for any group action elements in the same orbit have conjugate isotropy subgroups. We use language from category theory \cite{ferreira2006pseudo,fiore2011euler} to describe this process.

\begin{definition}\label{Def: 2-morphism of group category}Let $G_0,G_1$ be groups and $\mathfrak{f}_1,\mathfrak{f}_2:G_0\rightarrow G_1$ be homomorphisms. A \textbf{$2$-morphism} $g: \mathfrak{f}_1 \Rightarrow \mathfrak{f}_2$ is a group element $g\in G_1$ that relates $\mathfrak{f}_1$ and $\mathfrak{f}_2$ by conjugation, that is
$ \mathfrak{f}_1(h) = g\mathfrak{f}_2(h)g^{-1}\text{ for all }  h \in G_0\,.$
\end{definition}

Variants of the following definitions appeared simultaneously
in the work of Haefliger (\cite{Haefliger91}) and Corson (\cite{Corson92}). We will follow the version in \cite{CarboneNandaNaqvi} with slightly modified notations.

\begin{definition}\label{Def: Complex of Groups} Let $Y$ be a simplicial complex. A \textbf{complex of groups} $\mathcal{A} = (\mathfrak{G},\mathfrak{f},\mathfrak{g})$ \textbf{over} $Y$ consists of:

\begin{itemize}
\item [(1)] for each $\psi\in Y$ a finite group $\mathfrak{G}_\psi$, 

\item [(2)] for each pair $\psi_2 \subseteq \psi_1 \in Y$ there is an injective homomorphism $\mathfrak{f}_{\psi_1\psi_2} : \mathfrak{G}_{\psi_1}\rightarrow \mathfrak{G}_{\psi_2}$, and

\item [(3)] for each triple $\psi_3 \subseteq \psi_2 \subseteq \psi_1\in Y$ there is a $2$-morphism $\mathfrak{g}_{\psi_1 \psi_2 \psi_3}$ : $\mathfrak{f}_{\psi_2 \psi_3 }\circ \mathfrak{f}_{\psi_1 \psi_2} \Rightarrow \mathfrak{f}_{\psi_1\psi_3}$,
\end{itemize}
such that for any four simplices $\psi_4 \subseteq \psi_3 \subseteq \psi_2 \subseteq \psi_1 \in Y$, the following constraints hold:
\begin{itemize}
    \item [(a)] $\mathfrak{f}_{\psi_1\psi_1}$ is the identity $\mathfrak{G}_{\psi_1} \rightarrow \mathfrak{G}_{\psi_1}$.
    \item [(b)]  $\mathfrak{g}_{\psi_1\psi_1\psi_2} = \mathfrak{g}_{\psi_1\psi_2\psi_2}:\mathfrak{f}_{\psi_1\psi_2}\Rightarrow \mathfrak{f}_{\psi_1\psi_2}$ are both given by $e_{\psi_2},$ the identity element of $\mathfrak{G}_{\psi_2}$.
    \item [(c)] The following cocycle condition holds in the group $G_{\psi_4}$: $$\mathfrak{f}_{\psi_3\psi_4}(\mathfrak{g}_{\psi_1\psi_2\psi_3})\cdot \mathfrak{g}_{\psi_1\psi_3\psi_4} = \mathfrak{g}_{\psi_2\psi_3\psi_4}\cdot \mathfrak{g}_{\psi_1\psi_2\psi_4}.$$
\end{itemize}

\end{definition}

There is a notion of morphisms between complexes of groups, as discussed in \cite{BridsonHaefliger}. In this article, we only require a specific class of morphisms, which will allow us to streamline our discussion of morphisms following \cite{CarboneNandaNaqvi}. We first define the common target of this class of morphisms.

\begin{definition} Let $Y$ be a simplicial complex and $G$ be a group. The \textbf{constant $G$-valued complex of groups} $\mathcal{G}$ over $Y$ is the complex of groups $(\mathfrak{G}^G,\mathfrak{f}^G,\mathfrak{g}^G)$ over $Y$ defined a follows:
\begin{itemize}
\item [(1)] for each simplex $\psi \in Y$, $\mathfrak{G}^G_\psi = G$,
\item [(2)] for each pair $\psi_2 \subseteq \psi_1 \in Y$, $\mathfrak{f}^G_{\psi_1\psi_2} = id_G$, and
\item [(3)] for each triple $\psi_3 \subseteq \psi_2 \subseteq \psi_1\in Y$,  $\mathfrak{g}^G_{\psi_1 \psi_2 \psi_3}$ is the identity element of $G$.
\end{itemize}
\end{definition}

\begin{definition}\label{Def: Constant G Morphism} Let $Y$ be a simplicial complex, $G$ a finite group, and  $\mathcal{A}= (\mathfrak{G},\mathfrak{f},\mathfrak{g})$ a complex of groups over $Y$. We say $\Phi$ is an \textbf{injective morphism from $\mathcal{A}$ to the constant $G$-valued complex of groups}, (or simply a \textbf{constant $G$-morphism from $\mathcal{A}$}) if $\Phi$ consists of the following data.
\begin{itemize}
\item [(1)] for each simplex $\psi \in Y$, an injective group homomorphism $\Phi_\psi: \mathfrak{G}_\psi \rightarrow G$,
\item [(2)] for each pair $\psi_2 \subseteq \psi_1 \in Y$, a $2$-morphism $\Phi(\psi_1,\psi_2):\Phi_{\psi_2}\circ \mathfrak{f}_{\psi_1\psi_2}\Rightarrow \Phi_{\psi_1}$.
\item [(3)] for each triple $\psi_3 \subseteq \psi_2 \subseteq \psi_1\in Y$, we require that the following constraints hold:
\begin{itemize}
\item [(a)] $\Phi(\psi_1,\psi_1)$ is the identity element of $G$.
\item [(b)] $\Phi_{\psi_3}(\mathfrak{g}_{\psi_1\psi_2\psi_3})\cdot \Phi_{\psi_1\psi_3} = \Phi_{\psi_2\psi_3}\cdot\Phi_{\psi_1\psi_2}$.
\end{itemize}
\end{itemize}

\end{definition}

To associate a complex of groups to a regular $G$-complex, we begin by defining two functions: a \emph{lift} and a \emph{transfer}.

\begin{definition}\label{Def:Lift} Let $X=(V,\Sigma)$ be a regular $G$-complex, $X/G = (V',\Sigma')$ its quotient complex, and $\pi: X\rightarrow X/G$ the simplicial quotient map. A \textbf{lift} $\ell$ to $X$ is a function $\ell:\Sigma'\rightarrow \Sigma$ such that $\pi\circ \ell = id$. 
\end{definition} 

Note that we do not require that the image of a lift constitutes a simplicial complex, which is why we avoid the use of the term ``section'', which suggests a coherent choice of preimage. Indeed, in general, a lift does not preserve the face relations between simplices.

For the rest of this article, we assume that every regular $G$-complex $X$ is assigned a lift $\ell$.

\begin{definition}\label{Def: Unassociated transfer} Let $X$ be a simplicial complex and let $\mathcal{F}(X) = \{(\psi,\omega) \mid \omega \subseteq \psi \in X, \dim(\psi) = \dim(\omega) + 1\}$. A \textbf{transfer function} on $X$ is a function $T: \mathcal{F}(X) \rightarrow G.$
\end{definition}

In order to associate a transfer function to a regular $G$-complex, we identify group elements with certain properties based on the following observation.

\begin{proposition}\label{Prop: Unique simplex implied by face in quotient} Let $X$ be a regular $G$-complex, and $\pi : X \to X/G$. Fix $\psi \in X$, and let $\psi' = \pi (\psi)$. Then $\omega' \subseteq \psi'$ if and only if there exists a unique $\omega\in X$ such that $\omega \subseteq \psi$ and $\pi(\omega) = \omega'$.

\begin{proof}
Let $X = (V, \Sigma)$ be a regular $G$-complex and $\psi = \{v_0, \dots, v_d\} \in X$. Then $\psi' = \{Gv_0,\dots,Gv_d\}$. Suppose $\omega' \subseteq \psi'$. Then $\omega' = \{Gv_{i_1},\dots,Gv_{i_k}\}$ for some subset $\{ v_{i_1}, \dots v_{i_k}\}\,\subseteq \{ v_0, \dots, v_d\}\,.$ Thus $\omega = \{v_{i_1},\dots,v_{i_k}\} \subseteq \psi$ and $\pi(\omega) = \omega'$. To see that $\omega$ is unique, observe that if $\omega_1,\omega_2 \subseteq \psi$ and $\pi(\omega_1) = \pi(\omega_2) = \omega'$, then $\omega_1$ and $\omega_2$ are in the same $G$-orbit. Since $\omega_2 = g \omega_1 \subset \psi$ for some $g \in G$, Proposition \ref{Prop:OrbitNotAnotherFace} implies that $\omega_1 = \omega_2.$ 
The converse follows from the fact that $\pi$ preserves face relations.
\end{proof}
\end{proposition}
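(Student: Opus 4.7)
The plan is to prove the two directions separately, with the uniqueness assertion being the only subtle part. For the forward direction, write $\psi = \{v_0, \dots, v_d\}$, so $\psi' = \pi(\psi) = \{Gv_0, \dots, Gv_d\}$ by construction of $X/G$ in \cref{Prop: QuotientSimplicialComplex}. Given $\omega' \subseteq \psi'$, the vertex set of $\omega'$ is a subset of $\{Gv_0, \dots, Gv_d\}$, say $\{Gv_{i_1}, \dots, Gv_{i_k}\}$, and I would take $\omega := \{v_{i_1}, \dots, v_{i_k}\}$. This is a face of $\psi$ and visibly maps to $\omega'$ under $\pi$, which takes care of existence.

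For uniqueness, suppose $\omega_1, \omega_2 \subseteq \psi$ both satisfy $\pi(\omega_i) = \omega'$. By the last clause of \cref{Prop: QuotientSimplicialComplex}, $\pi^{-1}(\omega')$ is a single $G$-orbit, so there exists $g \in G$ with $\omega_2 = g\omega_1$. Since $\omega_2 \subseteq \psi$, I have $g\omega_1 \subseteq \psi$, which is precisely the hypothesis of \cref{Prop:OrbitNotAnotherFace} with the face $\omega_1$ of $\psi$. That proposition then yields $g \in G_{\omega_1}$, hence $\omega_2 = g\omega_1 = \omega_1$. The converse direction is immediate: if $\omega \subseteq \psi$ and $\pi(\omega) = \omega'$, then because $\pi \colon X \to X/G$ is a simplicial map (also from \cref{Prop: QuotientSimplicialComplex}), we have $\omega' = \pi(\omega) \subseteq \pi(\psi) = \psi'$.

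The only step requiring any real machinery is uniqueness, and the main obstacle there is recognizing that the situation ``$\omega_2 = g\omega_1$ with both sitting inside $\psi$'' is exactly what \cref{Prop:OrbitNotAnotherFace} is designed to handle; everything else follows formally from the simplicial-quotient structure already established in \cref{Prop: QuotientSimplicialComplex}.
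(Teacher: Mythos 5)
Your proposal is correct and follows essentially the same route as the paper's proof: construct $\omega$ from the corresponding vertex subset of $\psi$ for existence, invoke \cref{Prop:OrbitNotAnotherFace} on $\omega_2 = g\omega_1 \subseteq \psi$ for uniqueness, and use that $\pi$ preserves face relations for the converse. No gaps.
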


We now can describe our desired transfer function, which depends on the choice of a lift function.

\begin{definition}\label{Def: Associated Transfer} Let $X$ be a regular $G$-complex and $\ell$ a lift to $X$. A \textbf{transfer function associated to $X$ and $\ell$} is a function $T_\ell : \mathcal{F}(X/G) \longrightarrow G$ such that $T_\ell(\psi',\omega') := g$, where $g \in G$ such that $g^{-1}\ell(\omega')\subseteq \ell(\psi')$.
\end{definition}

The existence of $g \in G$ in Definition \ref{Def: Associated Transfer} is justified as follows. By the definition of a lift, $\pi(\ell(\psi')) =\psi'$ and $\pi(\ell(\omega')) =\omega'$. Because $(\psi',\omega')\in 
\mathcal{F}(X/G),$ $\omega' \subseteq \psi'$. Hence by Proposition \ref{Prop: Unique simplex implied by face in quotient}, there exists $\omega$ such that $ \omega\subseteq \ell(\psi')$ and $\pi(\omega) = \omega'$. In particular $\pi(\omega) = \omega' = \pi(\ell(\omega'))$ means that $\omega$ and $\ell(\omega')$ are in the same orbit, so there exists $g\in G$ such that $g\omega = \ell(\omega')$, and $g^{-1}\ell(\omega') = \omega \subseteq \ell(\psi')$.

Note that $X$ and $\ell$ do not determine $T_\ell$ uniquely. However, $X$, $\ell$, and $T_\ell$ will together be sufficient to uniquely determine our desired associated complex of groups and constant $G$-morphism.

\begin{definition}\label{Def: Associated Complex and Morphism}Let $X$ be a regular $G$-complex. Let $\ell$ be a lift to $X$ and $T_\ell$ be an associated transfer. 
\begin{itemize}
\item The \textbf{complex and morphism associated to the triple $(X,\ell,T_\ell)$} is the pair $(\mathcal{A},\Phi)$, where $\mathcal{A} = (\mathfrak{G},\mathfrak{f}, g)$ is a complex of groups over $X/G$ and $\Phi$ is a constant $G$-morphism from $\mathcal{A}$ defined as follows. 

\begin{itemize}
\item [($\mathcal{A}$1)] For each $\psi' \in X/G$, $\mathfrak{G}_{\psi'} = G_{\ell(\psi')},$ the isotropy subgroup of $\ell(\psi')$.
\item [($\mathcal{A}$2)] For each pair $\psi'_2 \subseteq \psi'_1 \in X/G$, $\mathfrak{f}_{\psi'_1\psi'_2}$ is the map $h\mapsto T_\ell(\psi'_1,\psi'_2)hT_\ell(\psi'_1,\psi'_2)^{-1}$.
\item [($\mathcal{A}$3)] For each triple $\psi_3 \subseteq \psi_2 \subseteq \psi_1 \in X/G$, the $2$-morphism $\mathfrak{g}_{\psi_1 \psi_2 \psi_3}$ is given by $T_\ell(\psi'_2,\psi'_3)\cdot T_\ell(\psi'_1,\psi'_2) \cdot T_\ell(\psi'_1,\psi'_3)$.
\item [($\Phi$1)] For each simplex $\psi'\in X/G$, $\Phi_{\psi'}$ is the inclusion map $\mathfrak{G}_{\psi'}\hookrightarrow G$.
\item [($\Phi$2)] For each pair $\psi'_2 \subseteq \psi'_1 \in X/G$, the $2$-morphism $\Phi(\psi'_1,\psi'_2)$ is given by $T_\ell(\psi'_1,\psi'_2)$.
\end{itemize}
\vspace{0.2cm}
\item  The \textbf{output triple associated to $(X,\ell,T_\ell)$} is the triple $(X/G,S_\ell,T_\ell),$ where $S_\ell$ is the function $\Sigma' \rightarrow \{\text{subgroups of $G$}\}$ given by $\psi' \mapsto G_{\ell(\psi')}.$

\end{itemize}
\end{definition}

Note that the output triple $(X/G,S_\ell,T_\ell)$ uniquely determines the complex and morphism $(\mathcal{A},\Phi)$ associated to $(X, \ell, T_\ell)$: $X/G$ is the underlying simplicial complex of $\mathcal{A}$, $S_\ell$ determines item $(\mathcal{A}1)$ and $(\Phi1)$, $T_\ell$ determines items $(\mathcal{A}2)$, $(\mathcal{A}3)$, and $(\Phi2)$.

We end this section by recalling a function from \cite{CarboneNandaNaqvi} that will be useful to us.

\begin{definition}\label{Def: CarboneNandaNaqvi coset function}Let $G$ be a finite group, $\mathcal{A}=(\mathfrak{G},\mathfrak{f},\mathfrak{g})$ a complex of groups over a simplicial complex $X$, and $\Phi$ a constant $G$-morphism from $\mathcal{A}$. For each $\omega'\in X$, let $c_{\omega'}: G\rightarrow G/\Phi_{\omega'}(\mathfrak{G}_{\omega'})$ be the canonical surjective map of sets that sends each group element $g$ to the corresponding left coset $g\Phi_{\omega'}(\mathfrak{G}_{\omega'})$. \end{definition}

Note that the function $c_\omega$ is depends only on the choice of the lift $\ell,$ since it is determined by $\Phi_{\omega'}$ and $\mathfrak{G}_{\omega'}$, both of which are determined by the lift per conditions $(\mathcal{A}1)$ and $(\Phi1)$ of Definition \ref{Def: Associated Complex and Morphism}. \

\subsection{Circulant Matrices and Group Rings}\label{subsec: Circulant Matrix and Group Rings}

In this section we recall and set notation for circulant matrices, following Kra and Simanca \cite{OnCirculantMatrices}, which will play a fundamental role in our construction once we restrict our attention to finite cyclic group. In particular, we highlight a result of Hurley \cite{GroupRingAndRingofMatrices} which relates group rings over finite cyclic groups to circulant matrices.

We take subscript indices of $v$ modulo $k$ in the following definition.

\begin{definition}\label{Def: Circulant Matrix} Let $\mathbb{F}$ be a field and $\mathbb{F}^k$ the vector space of row vectors with entries in $\mathbb{F}$. The \textbf{shift operator} on $\mathbb{F}^k$ is given by $\mathcal{S}(v_1,\dots,v_k):= (v_{k},v_1,\dots,v_{k-1})$.

The \textbf{circulant matrix} associated to the vector $v\in \mathbb{F}^k$, denoted by $\CircMatrix{v}$, is the $(k \times k)$-matrix with the $c$-th row given by $\mathcal{S}^{c-1}(v) = (v_{2-c},v_{3-c},\dots,v_{k+1-c}), \;c =1,\dots,k$.
\end{definition}

\begin{remark} Note that the transpose of a circulant matrix is also circulant. In particular, $\CircMatrix{(v_1,\dots,v_{n})}^T = \CircMatrix{(v_1,v_{n},v_{n-1},\dots,v_3,v_2)}$.
\end{remark}

Circulant matrices to provide a useful characterization of the group ring of a finite cyclic group.

\begin{definition}\label{Def: RG Matrix} Let $R$ be a ring and $G = (g_1, \dots, g_k)$ be a finite group equipped with the given ordering of its elements. A $(k\times k)$-matrix $M$ with entries in $R$ is an \textbf{$RG$-matrix over $R$} if $M_{i,j} = M_{i',j'}$ whenever $g_i^{-1}g_j = g_{i'}^{-1}g_{j'}$. The collection of all such matrices is denoted by $RGM_{k}$.
\end{definition}

The following theorem shows that mapping coefficients of generators to entries of a matrix is in fact an embedding of the group ring $RG$ to $RGM_{n}$.

\begin{theorem}[\cite{GroupRingAndRingofMatrices}, Theorem 1]\label{Thm: RG embeds to Matrix} Let $R$ be a ring and $G = (g_1, \dots, g_{k})$ be a finite group with a fixed ordering of its elements.  For $w = \sum_{i=1}^{k} a_{g_i}g_i \in RG,$ define $M(RG,w) \in RGM_{k}$ to be the matrix with $M_{i,j} = a_{g_i^{-1}g_j}.$ Then there is a ring isomorphism $\rho: RG \to RGM_{k}$ given by $\rho: w \mapsto M(RG,w)$.
\end{theorem}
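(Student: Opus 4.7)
The plan is to prove the theorem via a chain of three equalities. Writing $M := [\partial'_d]^{S_\ell,T}$ and extending $\rho_\alpha$ entrywise to matrices over $\mathbb{F}\mathbb{Z}_k$, the goal is to establish
\[
\rank(\partial_d) \;=\; \rank(\rho_\alpha(M)) \;=\; \rank(\rho_\alpha(D)) \;=\; \sum_i \rank(\rho_\alpha(D_{i,i})).
\]

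First I would invoke \cref{Lemma: Expansion equals Representation}, referenced in the introduction, to identify $\rho_\alpha(M)$ with a matrix representation of $\partial_d$ in ordered bases of $C_d(X;\mathbb{F})$ and $C_{d-1}(X;\mathbb{F})$ built from the orbit-enumerations $\{(gG_{\ell(\psi')})_j\,\ell(\psi')\}_j$ of \cref{Def: Coset act on simplex} via the canonical bijection $G/G_{\ell(\psi')} \to G\,\ell(\psi')$. Since the rank of $\partial_d$ is independent of the choice of ordered simplicial basis of $C_\ast(X;\mathbb{F})$, this yields $\rank(\partial_d) = \rank(\rho_\alpha(M))$.

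For the second equality, since $D$ is a Smith normal form of $M$, there exist matrices $U, V$ invertible over $\mathbb{F}\mathbb{Z}_k$ with $UMV = D$. By \cref{Thm: RG embeds to Matrix}, $\rho_\alpha$ is a ring isomorphism $\mathbb{F}\mathbb{Z}_k \to \mathbb{F}\mathbb{Z}_k M_k$, so its entrywise extension to matrix rings preserves products and identities and sends invertible matrices to invertible matrices over $\mathbb{F}$. Consequently $\rho_\alpha(U)\rho_\alpha(M)\rho_\alpha(V) = \rho_\alpha(D)$ with $\rho_\alpha(U), \rho_\alpha(V)$ invertible, and multiplication by invertible matrices preserves rank, giving $\rank(\rho_\alpha(M)) = \rank(\rho_\alpha(D))$. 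Finally, because $D$ has nonzero entries only on its diagonal, the expansion $\rho_\alpha(D)$ is a block matrix whose only potentially nonzero $k \times k$ blocks lie on the diagonal and are exactly $\rho_\alpha(D_{i,i})$, so $\rank(\rho_\alpha(D)) = \sum_i \rank(\rho_\alpha(D_{i,i}))$. Chaining the three equalities gives the theorem.

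The main obstacle is the first equality, namely the content of \cref{Lemma: Expansion equals Representation}. The bookkeeping there is the delicate part: the $\mathbb{Z}_k$-boundary matrix compresses the standard boundary data of $X$ using the lift $\ell$, the associated transfer $T_\ell$, and the coset enumeration of $G/G_{\ell(\psi')}$, and one must verify that the circulant expansion of each entry reproduces the correct sub-block of the ordinary boundary matrix of $\partial_d$, including signs and the orbit orderings on both the domain and codomain sides. Once this identification is in hand, the remaining two equalities reduce to the functoriality of $\rho_\alpha$ with respect to matrix multiplication and the standard block-diagonal rank formula, and the theorem follows.
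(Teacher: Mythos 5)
Your proposal does not address the statement in question. The statement to be proved is Hurley's result (\cref{Thm: RG embeds to Matrix}): that for a finite ordered group $G$ and a ring $R$, the map $\rho\colon w \mapsto M(RG,w)$ with $M_{i,j} = a_{g_i^{-1}g_j}$ is a ring isomorphism from the group ring $RG$ onto the set $RGM_{k}$ of $RG$-matrices. What you have written is instead a proof outline for the paper's main result (\cref{Thm: Main}) about recovering $\rank(\partial_d)$ from the Smith normal form of the $\mathbb{Z}_k$-boundary matrix --- a claim that \emph{uses} \cref{Thm: RG embeds to Matrix} as an ingredient (to conclude that $\rho_\alpha$ is an injective ring homomorphism, hence that $[\rho_\alpha]$ preserves products and invertibility) but is an entirely different statement. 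Nothing in your text establishes, or even mentions, that $w \mapsto M(RG,w)$ is additive, multiplicative, or bijective onto $RGM_k$.

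A proof of the actual statement would need three verifications. First, additivity of $\rho$ is immediate from the entrywise definition. Second, multiplicativity: writing $w_1 = \sum a_g g$ and $w_2 = \sum b_g g$, one computes $\bigl(M(RG,w_1)M(RG,w_2)\bigr)_{i,j} = \sum_{l=1}^{k} a_{g_i^{-1}g_l}\, b_{g_l^{-1}g_j}$, and since $(g_i^{-1}g_l)(g_l^{-1}g_j) = g_i^{-1}g_j$ and $g \mapsto g_i^{-1}g$ is a bijection of $G$, the pairs $(g_i^{-1}g_l,\, g_l^{-1}g_j)$ run over exactly the factorizations $gh = g_i^{-1}g_j$; hence the sum equals the coefficient of $g_i^{-1}g_j$ in $w_1w_2$, which is $M(RG,w_1w_2)_{i,j}$. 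Third, bijectivity onto $RGM_k$: since $g_1 = e$, the first row of $M(RG,w)$ is $M_{1,j} = a_{g_j}$, so $\rho$ is injective; conversely, given $N \in RGM_k$, setting $w = \sum_j N_{1,j}\, g_j$ yields $M(RG,w) = N$ because the defining condition $N_{i,j} = N_{i',j'}$ whenever $g_i^{-1}g_j = g_{i'}^{-1}g_{j'}$ forces every entry of $N$ to agree with the corresponding first-row entry. None of these steps appears in your proposal, so as an argument for \cref{Thm: RG embeds to Matrix} it is entirely missing.
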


Applying Theorem \ref{Thm: RG embeds to Matrix} to finite cyclic groups, we obtain the following corollary which is an essential tool for the proof of our main theorem.

\begin{corollary} \label{Prop: Cyclic gives Circulant}Let $R$ be a ring and $G = \mathbb{Z}_k = (e, \alpha,\dots,\alpha^{k-1})$ a finite cyclic group of order $k$ with the given ordering.  Then $M(RG,\sum_{i=1}^{k}a_i\alpha^{i-1}) = 
 \CircMatrix{(a_1,a_2,\dots,a_{k})}.$ That is, 

$$
M(RG,\sum_{i=0}^{k-1}a_i\alpha^i) =  \begin{bmatrix}
a_{1} & a_{2} & a_{3} & \cdots & a_{k}\\
a_{k} & a_{1} & a_{2} & \cdots & a_{k-1}\\
\vdots & \vdots & \vdots & \ddots & \vdots \\
a_{2} & a_{3} & a_{4} & \cdots & a_{1}\\
\end{bmatrix}.$$

\noindent In other words, $R\mathbb{Z}_k$ embeds into the ring of $(k\times k)$-matrices over $R$ as the set of $k\times k$ circulant matrices over $R$.
\end{corollary}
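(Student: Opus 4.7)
The proof is essentially a direct unpacking of \cref{Thm: RG embeds to Matrix} under the specialization $G = \mathbb{Z}_k$ with the stated ordering; the only real content is matching two indexing conventions (the exponent-based indexing coming from the group ring and the row-shift-based indexing defining a circulant matrix).

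First, I would set $g_i = \alpha^{i-1}$ so that the ordering of $G$ matches the convention $(g_1,\dots,g_k) = (e,\alpha,\dots,\alpha^{k-1})$, and translate the element $w = \sum_{i=1}^{k} a_i \alpha^{i-1}$ into the form $w = \sum_{g\in G} a_g\, g$ used in \cref{Thm: RG embeds to Matrix}, noting that $a_{\alpha^{i-1}} = a_i$ for each $i$. Then I would compute
\[
g_i^{-1} g_j \;=\; \alpha^{-(i-1)}\alpha^{j-1} \;=\; \alpha^{j-i},
\]
so that, invoking \cref{Thm: RG embeds to Matrix},
\[
M(RG,w)_{i,j} \;=\; a_{g_i^{-1}g_j} \;=\; a_{\alpha^{j-i}} \;=\; a_{((j-i)\bmod k)+1},
\]
where exponents and subscripts are reduced modulo $k$ into the range $\{1,\dots,k\}$.

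Second, I would compute the $(i,j)$-entry of $\CircMatrix{(a_1,\dots,a_k)}$ directly from \cref{Def: Circulant Matrix}. Row $i$ is by definition $\mathcal{S}^{i-1}(a_1,\dots,a_k) = (a_{2-i},a_{3-i},\dots,a_{k+1-i})$ with indices taken modulo $k$, so its $j$-th entry is $a_{j+1-i}$, again interpreted modulo $k$. Since $j+1-i \equiv (j-i)+1 \pmod{k}$, this matches the formula for $M(RG,w)_{i,j}$ derived above, so the two matrices agree entry-wise.

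Finally, for the last assertion, I would observe that \cref{Thm: RG embeds to Matrix} already establishes that $\rho\colon RG \to RGM_k$ is a ring isomorphism, and the computation above shows that its image in the cyclic case is exactly the set of $k\times k$ circulant matrices over $R$; conversely, every circulant matrix $\CircMatrix{(a_1,\dots,a_k)}$ arises as $\rho\bigl(\sum_i a_i \alpha^{i-1}\bigr)$, so the embedding is onto the subring of circulants. There is no real obstacle here; the only place to stumble is the modular reindexing, and I would handle it explicitly once at the start rather than carrying qualifiers through every line.
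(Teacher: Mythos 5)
Your proposal is correct and matches the paper's treatment: the paper gives no separate proof, presenting this as an immediate specialization of \cref{Thm: RG embeds to Matrix}, and your index computation ($M(RG,w)_{i,j}=a_{\alpha^{j-i}}$ versus the shift-operator formula $a_{j+1-i}$, both reduced mod $k$) is exactly the verification being left implicit. No issues.
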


\section{Compatible Associated Matrix}\label{sec: Compatible Matrix}

We now develop the algebraic framework which underlies our procedure. Let $X$ be a simplicial complex. We denote by $\Sigma_d$ the set of $d$-simplices of $X$, so $X = (V,\sqcup_d\Sigma_d),$. We take $C_d(X;\mathbb{F})$ to be the $d$-th simplicial chain group of $X$ with coefficients in a field $\mathbb{F}$ and write  $\partial_d$ for the $d$th boundary map of this chain complex. For an oriented $d$-simplex $\sigma = \langle v_0,\dots,v_d \rangle \in \Sigma_d$, we denote by $[\sigma] = [v_0,\dots,v_d]$ the elementary $d$-chain in $C_d(X;\mathbb{F})$ corresponding to $\sigma$. Note that in such a case, if $\Sigma_d = \{\sigma_1,\dots\sigma_n\}$, then $\{[\sigma_1],\dots,[\sigma_n]\}$ is a basis of $C_d(X;\mathbb{F})$, and we call it the \textbf{simplicial basis}. If we order $\Sigma_d$, denoted by $(\sigma_1,\dots\sigma_n)$, then $([\sigma_1],\dots,[\sigma_n])$ is called an \textbf{ordered simplicial basis of $C_d(X;\mathbb{F})$}.  

Also note that there is an induced action of $G$ on $C_d(X;\mathbb{F})$ by $g[\psi] = \pm[g\psi]$ for each oriented $d$-simplex $\psi,$ extended linearly. With suitable choice of orientations, we can assume that $g[\psi] = [g\psi]$ for all $g \in G.$ The simplicial quotient map $\pi$ then induces a chain map, which we also denote $\pi$ by abuse of notation,
$$\pi : C_d(X;\mathbb{F}) \longrightarrow C_d(X/G;\mathbb{F}), [v_0,\dots,v_d] \mapsto [v_0',\dots,v_d']=[Gv_0,\dots,Gv_d],$$
on an ordered simplicial basis, extended linearly. We then call $\pi$ the \textbf{quotient chain map}. 

\subsection{Compatible Ordered Basis}

In this section our goal is to describe matrix representations of $\partial_d$ and $\partial'_d$ which are suitable for our reconstruction. 
We first orient and order $\Sigma'_d$, which induces an ordered simplicial basis of $C_d(X/G;\mathbb{F})$. We then use this basis to construct a compatible ordered simplicial basis for $C_d(X;\mathbb{F})$.

\begin{definition}\label{Def: Compatible Simplicial Basis} Let $\{[\psi']\}_{\psi' \in \Sigma_d'}$ be the simplicial basis for $C_d(X/G;\mathbb{F}).$ The simplicial basis of $C_d(X;\mathbb{F})$ \textbf{compatible with $\{[\psi']\}_{\psi'\in \Sigma'_d}$} is the basis $\{[\psi]\}_{\psi\in \Sigma_d}$ with the property that for any simplex $\psi = [v_0,\dots, v_d]\in \Sigma_d$,  $\pi([\psi])= \pi([v_0,\dots, v_d])  = [\pi v_0,\dots, \pi v_d] =  [\pi\psi]$.
\end{definition}

The next step is to construct an ordering of $\Sigma_d$.  We first note that for a regular $G$-complex $X = (V,\sqcup_d \Sigma_d)$, the  induced action of $G$ on $\Sigma_d$ partitions $\Sigma_d$ by orbits of $d$-simplices.  Let $X/G = (V', \sqcup_d \Sigma'_d)$, $\Sigma'_d = \{\psi_1,\dots,\psi_n\}$, and $\pi$ be the quotient simplicial map. By \cref{Prop: QuotientSimplicialComplex} and the surjectivity of $\pi$, $\{\pi^{-1}(\psi'_1),\dots,\pi^{-1}(\psi'_n)\}$ is precisely the list of $G$-orbits of $\Sigma_d$. Hence an ordering $(\psi'_1,\dots,\psi'_n)$ of $\Sigma'_d$ yields an ordering $(\pi^{-1}(\psi'_1), \dots ,\pi^{-1}(\psi'_n))$ of the $G$-orbits of $\Sigma_d$.

On the other hand, there is no canonical choice of an ordering of each orbit $\pi^{-1}(\psi'_i)$. We can specify an ordering by fixing an ordering of the elements of the group $G$ as well as choosing a lift.
We first observe that there is an alternative way to describe the orbits $\pi^{-1}(\psi'_i)$, once a lift $\ell$ is given.

\begin{remark}\label{Rmk: Orbit of Lift is Inverse Projection}
For any simplex $\psi'$ of $X/G$, $G(\ell(\psi')) = \pi^{-1}(\psi')$. That is, the orbit of $\ell(\psi')$ is  the orbit of simplices over $\psi'$. This follows from \cref{Def:Lift}, since $\pi(\ell(\psi')) = \psi'$ and so $\ell(\psi')$ is in the orbit $\pi^{-1}(\psi')$.
\end{remark}

Moving forward, we will assume that all finite groups $G$ are equipped with a fixed ordering $(g_1,\dots,g_k)$ of their elements, so that the first element is the identity, that is, $g_1 = e.$ Observe that once we have such an ordering, for any subgroup $H\leq G$, there is a canonical way to order the left cosets in $G/H$.

\begin{definition}\label{Def: Ordering of Cosets compatible with G}Let $G= (g_1,\dots,g_{k})$ be an ordered finite group. For any $H\leq G$, we define $((gH)_1,\dots,(gH)_{|G/H|})$, an \textbf{ordering of $G/H$ compatible with the ordering $(g_1,\dots,g_{k})$} of $G$, by removing all duplicated elements from the $\{g_1H,\dots,g_{k}H\}$ after their first appearance.  We then say that {\bf $G/H=((gH)_1,\dots,(gH)_{|G/H|})$ is ordered by $G=(g_1,\dots,g_{k})$}.
\end{definition}

For the remainder of the article we will assume that for any subgroup $H$ of an ordered finite group $G$, the left cosets $G/H$ are ordered by $G$. Note that the coset $(gH)_1 = g_1H = eH$ is then just $H$. 
 
 Lastly, consider the case where $H = G_\psi,$ the isotropy subgroup of a simplex of a regular $G$-complex.  Using the notation of \cref{Def: Coset act on simplex}, we obtain $(gG_\psi)_1\psi = (G_\psi)\psi = \psi$.  For a given lift, an ordering of $G$ therefore induces an ordering of $\pi^{-1}(\psi')$ for any $\psi' \in \Sigma'$ which is defined as follows.

\begin{definition}\label{Def: Ordering of Orbit Compatible with Group and Lift} Let $\psi'$ be a $d$-simplex of $X/G$. Let $H = G_{\ell(\psi')}$. Let $G/H = ((gH)_1,\dots,(gH)_{|G/H|})$ be ordered by $G=(g_1,\dots,g_{k})$. The \textbf{ordering of $\pi^{-1}(\psi')$ compatible with $(g_1,\dots,g_{k})$} is then the ordering
$$((gH)_1(\ell(\psi')),\dots,(gH)_{|G/H|}(\ell(\psi')))$$
of $\pi^{-1}(\psi') = G(\ell(\psi'))$. When $\pi^{-1}(\psi')$ is equipped with such an ordering, we say $\pi^{-1}(\psi')$ is \textbf{ordered by $(g_1,\dots,g_{k})$}.
\end{definition}

Now we combine these two definitions to construct an ordering of the set $\Sigma_d$ of $d$-simplices of a regular $G$-complex $X$, given a lift $\ell$ and an ordering $(\psi'_1,\dots,\psi'_{n})$ of $\Sigma'_d$. We begin by defining a finite sequence $(n_a)$ of integers for $a= 1,\dots, n$ (where $n = |\Sigma'_d|$). Denote $G_{\ell(\psi'_a)}$ by $H_a$. Then let $n_1 := 1$ and inductively define $n_{a+1} := n_a + |G/H_a|$.

For each simplex $\psi'_a$ of $\Sigma'_d$, let the simplices in the orbit
$$\pi^{-1}(\psi'_a) = G\ell(\psi'_a) = ((gH_a)_1\psi_{n_a},\dots,(gH_a)_{|G/H_a|}\psi_{n_a})$$ be ordered by $G$. Then define  
$$\psi_{n_a+k-1} := (gH_a)_{k}\ell(\psi'_a)
\,\,\text{for} \,\,k = 1,\dots,|G/H_a|\,.$$

Note that $\{n_a,\dots ,n_a + |G/H_a|-1\}$ for $a= 1,\dots, n$ is a partition of $\{1,\dots,|\Sigma_d|\}$, and each $\{n_a,\dots ,n_a + |G/H_a|-1\}$ is a set of indices of a set $\{G\ell(\psi'_a) = \pi^{-1}(\psi'_a)\}$ of $d$-simplices, which is part of the partition $\{\pi^{-1}(\psi'_a)\}_{\psi'_a\in \Sigma'_d}$ of $\Sigma_d$. Hence $\psi_1,\dots,\psi_{|\Sigma_d|}$ is a complete list of $d$-simplices of $X$ with no repetition, and we call this ordering of $\Sigma_d$ compatible with $\ell$, $(\psi'_1,\dots,\psi'_{n})$, and the ordering of $G$.

\begin{definition}\label{Def: Ordering of Simplices compatible with quotient/lift/group} The ordering $(\psi_1,\dots,\psi_{|\Sigma_d|})$ defined above is the \textbf{ordering of $\Sigma_d$ compatible with $\ell$, $(\psi'_1,\dots,\psi'_{n})$, and the ordering of $G$}.
\end{definition}

We also need to label the sets $\{n_a,\dots ,n_a + |G/H_a|-1\}$ of indices used in Definition \ref{Def: Ordering of Simplices compatible with quotient/lift/group}, whose elements are the indices of each distinct orbit of $d$-simplices.

\begin{definition}\label{Def: Partition Index by Orbits} Let $(\psi_1,\dots,\psi_{|\Sigma_d|})$ be the compatible ordering of $\Sigma_d$. Then the \textbf{$d$-th lifted partition of $\{1,\dots, |\Sigma_d|\}$ (corresponding to $X$, the ordering of $G$, and $(\psi'_1,\dots,\psi'_{n})$)} is the collection $\{\IndicesOfOrbit{X}{d}{a}\}_{a= 1,\dots, n}$ of sets of integers where  $\IndicesOfOrbit{X}{d}{a}: =\{n_a,n_a+1,\dots,n_a + |G/G_{\psi_{n_a}}|\}$ and $n_a$ is the unique index of $l(\psi'_a)$ assigned by the compatible ordering.
\end{definition}

When the context is clear, we simply call $\{\IndicesOfOrbit{X}{d}{a}\}_{a= 1,\dots, n}$ the lifted partition of indices.

Now we have all the ingredients to construct the desired ordered basis of the $d$-th chain group of a regular $G$-complex. Combining \cref{Def: Compatible Simplicial Basis} and \cref{Def: Ordering of Simplices compatible with quotient/lift/group}, we can describe our target matrix representation of the $d$-th boundary map of $X$.

\begin{definition}\label{Def: Associated Matrix Compatible with Quotient} The ordered simplicial basis $([\psi_1],\dots,[\psi_{|\Sigma_d|}])$ of $C_d(X;\mathbb{F})$ \textbf{compatible with $([\psi'_1] ,\dots, [\psi'_{n}])$, $\ell$, and the ordering of $G$} is the compatible simplicial basis of $C_d(X;\mathbb{F})$ with ordering induced by the compatible ordering of $\Sigma_d$.

The \textbf{matrix $[\partial_d]$ associated to $\partial_d$ (compatible with $([\psi'_1],\dots,[\psi'_{m}])$, $([\omega'_1],\dots, [\omega'_{n}])$, $\ell$, and the ordering of $G$)} is the matrix representation of $\partial_d$  relative to the compatible ordered simplicial basis of $C_d(X;\mathbb{F})$ and $C_{d-1}(X;\mathbb{F})$.
\end{definition}

For the rest of this article we let $[\partial'_d]$ be the matrix representation of $\partial'_d$ with respect to the ordered simplicial basis $([\psi'_1],\dots,[\psi'_{n}])$ and $([\omega'_1],\dots, [\omega'_{m}])$ of $C_d(X/G;\mathbb{F})$ and $C_{d-1}(X/G;\mathbb{F})$ respectively.

The following lemma describes the entries of a compatible associated matrix.

\begin{lemma}\label{Lemma: Entries of Compatible Matrix}
Let $[\partial_d]$ be the compatible matrix associated to $\partial_d$. Let $\psi_j \in \Sigma_d(X),\omega_i \in \Sigma_{d-1}(X)$, $\psi_b' \in \Sigma'_d(X/G),\omega_a' \in \Sigma'_{d-1}(X/G)$.
If $\pi(\psi_j) =\psi'_b$, $\pi(\omega_i)=\omega'_a$, and $\omega_i\subset \psi_j$, then 
$$[\partial_d]_{i,j} = [\partial'_d]_{a,b}\, .$$

\end{lemma}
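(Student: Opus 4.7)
The plan is to use that the quotient simplicial map $\pi$ induces a chain map (the quotient chain map defined just before Subsection 3.1) combined with the compatibility property of the chosen basis (Definition 3.1) and uniqueness of lifts of faces (Proposition 2.6).

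First, I would expand $\partial_d[\psi_j]$ in the compatible simplicial basis of $C_{d-1}(X;\mathbb{F})$ as $\partial_d[\psi_j] = \sum_{i'} [\partial_d]_{i',j}[\omega_{i'}]$. Applying $\pi$ to both sides and using that $\pi$ is a chain map together with the basis compatibility $\pi[\psi_j] = [\psi'_b]$, I obtain
\[
\partial'_d[\psi'_b] \;=\; \pi(\partial_d[\psi_j]) \;=\; \sum_{i'} [\partial_d]_{i',j}\,\pi[\omega_{i'}].
\]
The basis compatibility also guarantees $\pi[\omega_{i'}] = [\pi(\omega_{i'})]$ in the simplicial basis of $C_{d-1}(X/G;\mathbb{F})$. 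Expanding the left side in the ordered simplicial basis of the quotient and comparing coefficients of $[\omega'_a]$ yields
\[
[\partial'_d]_{a,b} \;=\; \sum_{\substack{i' : \pi(\omega_{i'}) = \omega'_a}} [\partial_d]_{i',j}.
\]

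Next, I would argue the sum collapses to the single term at index $i$. For indices $i'$ with $\omega_{i'} \not\subseteq \psi_j$, the boundary coefficient $[\partial_d]_{i',j}$ vanishes by the simplicial boundary formula, so only indices with $\omega_{i'} \subseteq \psi_j$ contribute. Among these, Proposition 2.6 (applied to $\psi_j \in X$ and the face $\omega'_a \subseteq \psi'_b$ of $\pi(\psi_j) = \psi'_b$) guarantees there is a \emph{unique} face $\omega \subseteq \psi_j$ with $\pi(\omega) = \omega'_a$; since $\omega_i \subseteq \psi_j$ and $\pi(\omega_i) = \omega'_a$ by hypothesis, this unique face is $\omega_i$. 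Thus the sum collapses to the single summand $[\partial_d]_{i,j}$, giving the desired equality.

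The argument is essentially bookkeeping with orientations and indices; the only genuine piece of content is the appeal to Proposition 2.6, which converts the potentially many-to-one nature of $\pi$ on simplices into a one-to-one correspondence once we restrict attention to faces of a fixed simplex. The mild obstacle to watch is sign/orientation consistency: I must check that the compatibility of bases (Definition 3.1) really produces $\pi[\omega_{i'}] = [\pi(\omega_{i'})]$ with the same orientation as the chosen basis element for $\pi(\omega_{i'})$ in $C_{d-1}(X/G;\mathbb{F})$, so that no spurious signs arise when comparing coefficients. Granted that consistency, no case analysis on the position of the omitted vertex is needed.
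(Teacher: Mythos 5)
Your proposal is correct and follows essentially the same route as the paper's proof: push $\partial_d[\psi_j]$ through the quotient chain map, compare coefficients of $[\omega'_a]$, and collapse the resulting sum over the orbit of $\omega_i$ to a single term using the fact that $\psi_j$ has exactly one face over $\omega'_a$ (the paper cites \cref{Prop:OrbitNotAnotherFace} directly, while you cite its corollary \cref{Prop: Unique simplex implied by face in quotient}, an immaterial difference). The orientation consistency you flag is exactly what \cref{Def: Compatible Simplicial Basis} is designed to guarantee, so no gap remains.
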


\begin{proof}

By assumption $\pi(\psi_j) = \psi'_b$, hence the compatibility of the simplicial basis implies that $\pi[\psi_j] = [\psi'_b]$ and $\pi[\omega_i] = [\omega'_a]$. Let $\partial_d[\psi_j] = \sum_{r}c_r[\omega_r]$ and $\partial_d[\psi'_b] = \sum_{r'}c'_{r'}[\omega'_{r'}]$. Then $c_i = [\partial_d]_{i,j}$ and $c'_a = [\partial'_d]_{a,b}$. Applying $\pi$ to both sides of $\partial_d[\psi_j] = \sum_{r}c_r[\omega_r]$ we obtain $\partial'_d[\psi'_a] = \sum_{r'}(\sum_{\pi\omega_r = \omega'_{r'}}c_r)[\omega'_{r'}]$. 

By assumption $\pi(\omega_i) =\omega'_a$, hence $\omega_i\in \pi^{-1}(\omega'_a)$. Recall from \cref{Rmk: Orbit of Lift is Inverse Projection} that $\pi^{-1}(\omega'_a)$ is a $G$-orbit, and thus it is the orbit of $\omega_i$. But $\omega_i\subseteq \psi_j$ and by \cref{Prop:OrbitNotAnotherFace} no other $(d-1)$-simplex in the orbit of $\omega_i$ is a face of $\psi_j$. Hence by the construction of $\partial_d$, for any $r\neq i$ such that $\pi\omega_r = \omega'_a$, $c_r = 0$. It follows that $[\partial'_d]_{a,b} = c'_a = \sum_{\pi\omega_r = \omega'_{a}}c_r = c_i = [\partial_d]_{i,j}$, as desired.
\end{proof}

\subsection{Isotropy Expansion}\label{sec: Relationship}

In this section we use the isotropy subgroups to   expand a compatible boundary matrix of a simplicial $G$-complex to a larger matrix of the same rank. 

We define the following function between integers. Recall from \cref{Def: Ordering of Cosets compatible with G} that we have ordered $G/H$ for every subgroup $H$ of $G$. Hence for every left coset $g_cH$ of $H$ in $G$, there exists a unique integer $\gamma$ such that $(gH)_\gamma = g_cH$, where $g_c \in G = \{g_1, \dots , g_k\}$.

\begin{definition}\label{Def: Isotrope Index Reducing Function} 
Let $G$ be an ordered finite group, $|G| = k$, $X$ a regular $G$-complex with quotient $X/G$, and $\ell$ a lift to $X$. Let $d$ be a non-negative integer and $(\psi'_1,\dots,\psi'_{n})$ be an ordering of $\Sigma'_d$.  Let $\{\IndicesOfOrbit{X}{d}{b}\}_{b = 1,\dots,n}$ be the lifted partitions of simplices from \cref{Def: Partition Index by Orbits} and, for each $b$, denote by $q_b$ the smallest element in $\IndicesOfOrbit{X}{d}{b}$.  Define the function $\mathfrak{I}_\ell: \{1,\dots,n\}\times \{1,\dots,k\} \rightarrow \mathbb{Z}$ by
$$\mathfrak{I}_\ell(b,c) := q_b +1 +\gamma$$
where $\gamma$ is the unique integer that satisfies $(gG_{\ell(\psi'_{b})})_{\gamma} = g_{c}G_{\ell(\psi'_{b})}$. The \textbf{$d$-th isotropy index-reducing function of $X$ (corresponding to $\ell$, the ordering of $G$, and the ordering $(\psi'_1,\dots,\psi'_{n})$ of $\Sigma'_d$)}  is the function $\IsotRed{X}{d}:\{1,2,\dots,nk\} \rightarrow \{1,2,\dots,|\Sigma_d|\}$ defined by:
$$\IsotRed{X}{d}(i) :=  \mathfrak{I}_\ell(\floor{i/k}+1, i -\floor{i/k}k)\, .$$

\end{definition}

\vspace{0.2cm}
\begin{remark}\label{indexreducing}
If we let $i = (b-1)k + c = bk -k +c$ for integers $(b,c)\in \{1,\dots,n\}\times \{1,\dots,k\}$, and $\gamma$ be the integer satisfying $(gG_{\ell(\psi'_{b})})_{\gamma} = g_{c}G_{\ell(\psi'_{b})}$, then 
$\IsotRed{X}{d}(i) = q_b -1 +\gamma\, .$
Also note that $\sum_{j=1}^{b-1}|\IndicesOfOrbit{X}{d}{j}| = n_{b}-1$, since the elements of  $\IndicesOfOrbit{X}{d}{j}$ are consecutive integers. Hence we also have $\IsotRed{X}{d}(i) = \sum_{j=1}^{\floor{i/k}}|\IndicesOfOrbit{X}{d}{j}|+\gamma\, .$
\end{remark}

In the following proposition we prove two properties of this index-reducing function.

\begin{proposition}\label{Prop: Range of Isotrop Index Reducing}
Let $X$ be a regular $G$-complex. Let $\{\IndicesOfOrbit{X}{d}{b}\}$ be the lifted partitions of indices, and $\IsotRed{X}{d}$ the corresponding $d$-th isotropy index-reducing function of $X$. For $b= 1,\dots, n$, let $L_b$ be the set of consecutive integers $\{kb-k+1,\dots,kb\}$. Then
$\IsotRed{X}{d}(L_b) = \IndicesOfOrbit{X}{d}{b}.$
In particular,  $\IsotRed{X}{d}$ is surjective.
\end{proposition}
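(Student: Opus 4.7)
The plan is to prove the set equality $\IsotRed{X}{d}(L_b) = \IndicesOfOrbit{X}{d}{b}$ for each fixed $b \in \{1,\dots,n\}$, from which the surjectivity assertion follows immediately: $\{L_b\}_{b=1}^n$ partitions the domain $\{1,\dots,nk\}$ and $\{\IndicesOfOrbit{X}{d}{b}\}_{b=1}^n$ partitions the codomain $\{1,\dots,|\Sigma_d|\}$ by \cref{Def: Partition Index by Orbits}, so a per-block equality forces $\IsotRed{X}{d}$ to be surjective.

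To handle a fixed $b$, I would parametrize $L_b$ by writing its elements as $i = (b-1)k + c$ with $c \in \{1,\dots,k\}$ and invoke \cref{indexreducing} to rewrite
\[
\IsotRed{X}{d}(i) = q_b - 1 + \gamma_c,
\]
where $\gamma_c$ is the unique integer satisfying $(gG_{\ell(\psi'_b)})_{\gamma_c} = g_c G_{\ell(\psi'_b)}$. The problem then reduces to showing that the assignment $c \mapsto \gamma_c$ exhausts $\{1,\dots,|G/G_{\ell(\psi'_b)}|\}$ as $c$ ranges over $\{1,\dots,k\}$.

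For that surjectivity, I would appeal directly to \cref{Def: Ordering of Cosets compatible with G}: setting $H = G_{\ell(\psi'_b)}$, the ordering $((gH)_1,\dots,(gH)_{|G/H|})$ is defined by enumerating $g_1H,\dots,g_kH$ and discarding repeats after first appearance. In particular, every coset $(gH)_\gamma$ equals $g_cH$ for some (in fact $|H|$) choices of $c$, so $c \mapsto \gamma_c$ is surjective. Combined with the previous display this yields
\[
\IsotRed{X}{d}(L_b) = \{q_b,\, q_b+1,\, \dots,\, q_b + |G/G_{\ell(\psi'_b)}| - 1\}.
\]
Finally, I would match this set against $\IndicesOfOrbit{X}{d}{b}$ by observing that $q_b = n_b$ (the smallest index in $\IndicesOfOrbit{X}{d}{b}$) and that $\psi_{n_b} = \ell(\psi'_b)$ by \cref{Def: Ordering of Simplices compatible with quotient/lift/group}, so $G_{\psi_{n_b}} = G_{\ell(\psi'_b)}$ and the two sets coincide.

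The argument is essentially a bookkeeping exercise, and the main obstacle is exactly that: reconciling the floor-based definition of $\IsotRed{X}{d}$ with the cleaner parametrization $i = (b-1)k+c$, and making sure the convention for enumerating $G/H$ lines up with that used to define $\IndicesOfOrbit{X}{d}{b}$. Once those conventions are aligned, the chain of equalities above is forced and there is no further content to the proof.
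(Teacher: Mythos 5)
Your proposal is correct and follows essentially the same route as the paper: parametrize $L_b$ as $i=(b-1)k+c$, apply \cref{indexreducing}, observe that $c\mapsto\gamma_c$ is surjective because $g_1H,\dots,g_kH$ exhausts $G/H$, and deduce surjectivity of $\IsotRed{X}{d}$ from the two partitions. The only cosmetic difference is that you establish the set equality in one pass rather than as two separate inclusions.
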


\begin{proof} We first prove that $\IsotRed{X}{d}(L_b) \subseteq \IndicesOfOrbit{X}{d}{b}$. Fix an integer $b$, and let $q_b$ be the index of $\ell(\psi'_b)$, then $\IndicesOfOrbit{X}{d}{b} = \{q_b, q_b+1,\dots,q_b-1+|G/G_{\ell(\psi'_{b})}|\}$. Then $\IsotRed{X}{d}(L_b) \subseteq \IndicesOfOrbit{X}{d}{b}$ follows directly from the definition of $\IsotRed{X}{d}$. For the other direction, note that each integer in $\IndicesOfOrbit{X}{d}{b}$ can be expressed as $q_b -1 +\gamma$ for a unique $\gamma\in \{ 1,\dots,|G/G_{\ell(\psi'_{b})}|\}$. We now claim that there exists an integer  $c\in \{1,\dots, k\}$ such that $(gG_{\ell(\psi'_{b})})_{\gamma} = g_{c}G_{\ell(\psi'_{b})}$. We can then let $i = bk-k+c$ which implies that $\IsotRed{X}{d}(i) = q_b-1+\gamma$ by Remark \ref{indexreducing}.
To prove the claim, let $\gamma\in \{ 1,\dots,|G/G_{\ell(\psi'_{b})}|\}$.  
Recall that by the definition of left cosets $\{g_{c}G_{\ell(\psi'_{b})}\}_{c = 1,\dots,k} = \{gG_{\ell(\psi'_{b})}\}_{g\in G}$ is an exhaustive (with potential repetition) list of elements in $G/G_{\ell(\psi'_{b})} =
(gG_{\ell(\psi'_{b})})_{\gamma}\}_{\gamma=1,\dots,|G/G_{\ell(\psi'_{b})}|}$.  But then $(gG_{\ell(\psi'_{b})})_{\gamma}$ must be an element of $ \{g_{c}G_{\ell(\psi'_{b})}\}_{c = 1,\dots,k}$, hence there exists some $c\in \{1,\dots, k\}$ such that $g_cG_{\ell(\psi'_{b})} = (gG_{\ell(\psi'_{b})})_{\gamma}$ as desired.

Lastly, to see that $\IsotRed{X}{d}$ is surjective, we note that $\{L_b\}$ partitions its domain $\{1,\dots, nk\}$, while $\{\IndicesOfOrbit{X}{d}{b}\}$ partitions its codomain $\{1,\dots, |\Sigma_d|\}$. Hence $\IsotRed{X}{d}(L_b) = \IndicesOfOrbit{X}{d}{b}$ implies that $\IsotRed{X}{d}$ is surjective.
\end{proof}

We now define the enlarged matrix.

\begin{definition}\label{Def: Isotrope Expansion} Let $\mathbb{F}$ be a field. Let $([\psi'_1],\dots,[\psi'_{n}])$ and $([\omega'_1],\dots, [\omega'_{m}])$ be ordered simplicial bases of $C_d(X/G;\mathbb{F})$ and $C_{d-1}(X/G;\mathbb{F})$ respectively. Let $[\partial_d]$ be the compatible matrix associated to $\partial_d$ from \cref{Def: Associated Matrix Compatible with Quotient}. Then, $\IsotExp{d}$, the \textbf{isotropy expansion of $[\partial_d]$}, is the $(m|G|\times n|G|)$ matrix over $\mathbb{F}$ defined as follows:
$$\IsotExp{d}_{i,j} := [\partial_d]_{\IsotRed{X}{d-1}(i),\IsotRed{X}{d}(j)}$$
for $i= 1,\dots, m|G|$ and $j = 1,\dots,n|G|$.
\end{definition}

The next lemma shows that such an expansion does not change the rank of the matrix.

\begin{lemma} \label{Lemma: Isotrope Expansion Preserve Rank} Let $[\partial_d]$ be the compatible matrix associated to $\partial_d$. Then the isotropy expansion of $[\partial_d]$ has the same rank as $[\partial_d]$.
\end{lemma}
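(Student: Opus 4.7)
The plan is to realize $\IsotExp{d}$ as the result of applying row- and column-duplication operations to $[\partial_d]$, then invoke the standard linear-algebra fact that such operations preserve rank. The surjectivity statement in \cref{Prop: Range of Isotrop Index Reducing} is precisely what is needed to guarantee that no row or column of $[\partial_d]$ is lost in the process.

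First, write $\sigma = \IsotRed{X}{d-1}$ and $\tau = \IsotRed{X}{d}$ for brevity, so the defining equation becomes $\IsotExp{d}_{i,j} = [\partial_d]_{\sigma(i),\tau(j)}$ for $i \in \{1,\dots,m|G|\}$ and $j \in \{1,\dots,n|G|\}$. Introduce the $(m|G|\times m)$ matrix $P$ with $P_{i,k} = 1$ if $\sigma(i)=k$ and $0$ otherwise, and similarly the $(n|G|\times n)$ matrix $Q$ with $Q_{j,l} = 1$ if $\tau(j)=l$ and $0$ otherwise. A direct computation gives
\[
(P[\partial_d]Q^T)_{i,j} = \sum_{k,l} P_{i,k}[\partial_d]_{k,l}Q^T_{l,j} = [\partial_d]_{\sigma(i),\tau(j)} = \IsotExp{d}_{i,j},
\]
so $\IsotExp{d} = P\,[\partial_d]\,Q^T$.

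Next, I would show that $P$ has full column rank $m$ and $Q$ has full column rank $n$. By \cref{Prop: Range of Isotrop Index Reducing}, $\sigma$ and $\tau$ are surjective, so for every column index $k \in \{1,\dots,m\}$ there is at least one $i$ with $\sigma(i)=k$, giving a $1$ in the $k$-th column of $P$; since each row of $P$ contains a single $1$, the columns of $P$ are pairwise orthogonal nonzero $0/1$-vectors, hence linearly independent. The argument for $Q$ is identical.

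Finally, I would conclude by invoking the standard fact that multiplying by a matrix of full column rank on the left, or by a matrix of full row rank on the right, preserves rank. Since $P$ has full column rank, $\rank(P[\partial_d]Q^T) = \rank([\partial_d]Q^T)$; since $Q^T$ has full row rank, this equals $\rank([\partial_d])$. I do not foresee a substantive obstacle here: the only place where something could go wrong is if the index-reducing functions failed to be surjective, but that case was already ruled out in \cref{Prop: Range of Isotrop Index Reducing}.
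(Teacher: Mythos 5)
Your proof is correct and rests on the same two pillars as the paper's own argument: the surjectivity of the index-reducing functions established in \cref{Prop: Range of Isotrop Index Reducing}, and the elementary fact that duplicating rows and columns does not change rank. The difference is one of packaging. The paper proceeds verbally in two stages, introducing an intermediate matrix $\IsotExpRow{d}$ obtained by duplicating columns of $[\partial_d]$ and then duplicating rows, observing at each stage that the set of columns (resp.\ rows) is unchanged up to repetition; you instead realize the whole expansion at once as an explicit factorization $\IsotExp{d} = P\,[\partial_d]\,Q^T$ with $0/1$ selection matrices of full column rank, which reduces the rank-preservation step to a one-line citation of standard linear algebra and is arguably cleaner. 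One small correction: $P$ should be of size $(m|G|\times|\Sigma_{d-1}|)$ and $Q$ of size $(n|G|\times|\Sigma_d|)$, since the codomains of $\IsotRed{X}{d-1}$ and $\IsotRed{X}{d}$ are $\{1,\dots,|\Sigma_{d-1}|\}$ and $\{1,\dots,|\Sigma_d|\}$ rather than $\{1,\dots,m\}$ and $\{1,\dots,n\}$ (and these are also the dimensions forced by compatibility of the product $P\,[\partial_d]\,Q^T$); with that relabeling the rest of your argument goes through verbatim.
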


\begin{proof}

We prove the result by introducing an intermediate matrix $\IsotExpRow{d}$ that has the same rank as both $[\partial_d]$ and $\IsotExp{d}$. 
Let $\IsotRed{X}{d}$ and $\IsotRed{X}{d-1}$ be the $d$-th and $(d-1)$-th isotropy index-reducing functions of $X$. We then define $\IsotExpRow{d}$ to be the $(|\Sigma_{d-1}|\times n|G|)$ matrix defined as follows:
$$\IsotExpRow{d}_{i',j} := [\partial_d]_{i',\IsotRed{X}{d-1}(j)}$$
for $i'= 1,\dots, |\Sigma_{d-1}|$ and $j = 1,\dots,n|G|$.

Recall from \cref{Prop: Range of Isotrop Index Reducing} that the function $\IsotRed{X}{d-1}$ is surjective. Hence for each column $j'$ of $[\partial_d]$, there exists $j$ such that $\IsotRed{X}{d-1}(j) = j'$, and column $j$ of $\IsotExpRow{d}$ is the same as column $j'$ of $[\partial_d]$. Conversely, each column $j$ of $\IsotExpRow{d}$ is the same as column $\IsotRed{X}{d-1}(j)$ of $[\partial_d]$. Hence $\IsotExpRow{d}$ has the same set of columns as $[\partial_d]$  (with some columns duplicated), and thus $\rank \IsotExpRow{d} = \rank [\partial_d]$.

Then, by comparing the definitions, for $i= 1,\dots, |\Sigma'_{d-1}||G|$ and $j = 1,\dots,n|G|$,
$$\IsotExp{d}_{i,j} =\IsotExpRow{d}_{\IsotRed{X}{d-1}(i),j}\,.$$

We then see that, similar to the relation
between $\IsotExpRow{d}$ and $[\partial_d]$, by the surjectivity of $\IsotRed{X}{d-1}$, $\IsotExp{d}$ has the same set of rows as $\IsotExpRow{d}$ (with some rows duplicated), and thus $\rank \IsotExp{d} = \rank \IsotExpRow{d} = \rank [\partial_d]$ as desired.
\end{proof}

Note that \cref{Lemma: Isotrope Expansion Preserve Rank} implies that the rank of $\IsotExp{d}$ is independent of the choice of lift as well as of the ordered simplicial basis.  However, the construction of $\IsotExp{d}$ deeply depends on these choices.

Lastly we prove a lemma describing the entries of the isotropy expansion.

\begin{lemma}\label{Lemma: Entries of Isotrope Expansion} Let $[\partial'_d]$ be the matrix associated to $\partial'_d$ and $\IsotExp{d}$ the isotropy expansion of the compatible matrix $[\partial_d]$. Then for $a=1,\dots,m$, $b=1,\dots, n$, and $c,c' = 1,\dots,k$, we have the following equation: 
\begin{align*}
\IsotExp{d}_{ka-k+c,kb-k+c'} &=
  \begin{cases}
   [\partial'_d]_{a,b}        & \text{if } g_c\ell(\omega'_a)\subseteq g_{c'}\ell(\psi'_b) \\
   0        & \text{otherwise}
  \end{cases}
\end{align*}
\end{lemma}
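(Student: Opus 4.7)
The plan is to unpack the definitions in two stages: first translating the index arithmetic of the isotropy expansion back to the compatible ordering of $\Sigma_d$ and $\Sigma_{d-1}$, and then applying \cref{Lemma: Entries of Compatible Matrix} to transfer the entry down to the quotient.

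First I would use \cref{Def: Isotrope Expansion} to write
\[
\IsotExp{d}_{ka-k+c,\,kb-k+c'} \;=\; [\partial_d]_{\IsotRed{X}{d-1}(ka-k+c),\,\IsotRed{X}{d}(kb-k+c')}.
\]
By \cref{indexreducing}, since $\lfloor (ka-k+c)/k\rfloor = a-1$, we get $\IsotRed{X}{d-1}(ka-k+c) = q_a - 1 + \gamma_a$, where $\gamma_a$ is the unique integer with $(gG_{\ell(\omega'_a)})_{\gamma_a} = g_c G_{\ell(\omega'_a)}$, and analogously $\IsotRed{X}{d}(kb-k+c') = q_b-1+\gamma_b$ with $(gG_{\ell(\psi'_b)})_{\gamma_b} = g_{c'} G_{\ell(\psi'_b)}$. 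From the construction of the compatible ordering of simplices in \cref{Def: Ordering of Simplices compatible with quotient/lift/group}, the simplex indexed $q_a - 1 + \gamma_a$ is precisely $(gG_{\ell(\omega'_a)})_{\gamma_a}\ell(\omega'_a)$, which by \cref{Def: Coset act on simplex} equals $g_c\ell(\omega'_a)$ since $g_c$ lies in that coset. Similarly the simplex indexed $q_b-1+\gamma_b$ is $g_{c'}\ell(\psi'_b)$.

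Next I would apply \cref{Lemma: Entries of Compatible Matrix} with $\omega_i = g_c\ell(\omega'_a)$ and $\psi_j = g_{c'}\ell(\psi'_b)$. The projection hypotheses $\pi(\omega_i) = \omega'_a$ and $\pi(\psi_j) = \psi'_b$ hold automatically since $\pi$ is $G$-invariant and $\pi \circ \ell = \mathrm{id}$. Therefore, whenever $g_c\ell(\omega'_a) \subseteq g_{c'}\ell(\psi'_b)$, the lemma gives $[\partial_d]_{i,j} = [\partial'_d]_{a,b}$, yielding the first branch.

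For the second branch I would observe that $[\partial_d]_{i,j}$ is the coefficient of $[\omega_i]$ in $\partial_d[\psi_j] = \sum_{\omega \subset \psi_j,\, \dim\omega = d-1} \pm[\omega]$; if $\omega_i \not\subseteq \psi_j$, this coefficient is zero. Thus when the containment $g_c\ell(\omega'_a) \subseteq g_{c'}\ell(\psi'_b)$ fails, the entry vanishes. The main (mild) obstacle is purely bookkeeping: making sure the translation between the index $ka-k+c$, the coset $(gG_{\ell(\omega'_a)})_{\gamma_a}$, and the simplex $g_c\ell(\omega'_a)$ is carried out consistently using the compatibility between the ordering on $G$, the induced ordering on $G/G_{\ell(\omega'_a)}$ from \cref{Def: Ordering of Cosets compatible with G}, and the resulting ordering on $\pi^{-1}(\omega'_a)$ from \cref{Def: Ordering of Orbit Compatible with Group and Lift}. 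Once this chain of identifications is made precise, both cases are immediate.
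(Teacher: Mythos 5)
Your proposal is correct and follows essentially the same route as the paper's proof: translate the indices via \cref{indexreducing}, identify the simplices $\omega_{p_a-1+i} = g_c\ell(\omega'_a)$ and $\psi_{q_b-1+j} = g_{c'}\ell(\psi'_b)$ through the compatible ordering, apply \cref{Lemma: Entries of Compatible Matrix} in the containment case, and note the vanishing coefficient otherwise. The only cosmetic difference is that you justify the index reduction via the floor computation (which has the same $c=k$ edge case as the paper's own \cref{Def: Isotrope Index Reducing Function}) rather than quoting the remark directly, but the argument is the same.
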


\begin{proof}
Recall from \cref{Def: Isotrope Expansion} that
$$\IsotExp{d}_{r,s} = [\partial_d]_{\IsotRed{X}{d-1}(r),\IsotRed{X}{d}(s)}$$
for $r= 1,\dots, mk$ and $s = 1,\dots,nk$, where $\IsotRed{X}{d},\IsotRed{X}{d-1}$ are respectively the $d$-th and $(d-1)$-th isotropy index-reducing function of $X$ corresponding to $\ell$, the ordering $(g_1,\dots,g_{k})$ of $G$, and the ordering of $\Sigma'_d$ and 
$\Sigma'_{d-1}$ induced by $([\psi'_1],\dots,[\psi'_{n}])$ and $([\omega'_1],\dots,[\omega'_{m}])$. 

For $a= 1,\dots, m$ and $b=1,\dots, n$, let $p_a$ and $q_b$ be the unique integers such that $\psi_{q_b} = \ell(\psi'_b)$ and $\omega_{p_a} = \ell(\omega'_a)$. Then by \cref{indexreducing}, for $c,c' = 1,\dots, k$, $\IsotRed{X}{d}(kb-k+c') = q_b-1+j$ where $j$ satisfies $(gG_{\ell(\psi'_b)})_j = g_{c'}G_{\ell(\psi'_b)}$, while $\IsotRed{X}{d-1}(ka-k+c) = p_a-1+i$ where $i$ satisfies $(gG_{\ell(\omega'_a)})_i = g_cG_{\ell(\omega'_a)}$. In particular, $\IsotExp{d}_{ka-k+c,kb-k+c'} = [\partial_d]_{\IsotRed{X}{d-1}(ka-k+c),\IsotRed{X}{d}(kb-k+c')} = [\partial_d]_{p_a-1+i,q_b-1+j}$. We will show:
\begin{align*}
[\partial_d]_{p_a-1+i,q_b-1+j} &=
  \begin{cases}
   [\partial'_d]_{a,b}        & \text{if } g_c\ell(\omega'_a)\subseteq g_{c'}\ell(\psi'_b)\,, \\
   0        & \text{otherwise}\,.
  \end{cases}
\end{align*}

Note that as ordering indices of elements in $G/G_{\ell(\omega'_a)}$ and $G/G_{\ell(\psi'_b)}$ respectively, $1 \leq i \leq |G/G_{\ell(\omega'_a)}|$ and $1 \leq j \leq  |G/G_{\ell(\psi'_b)}|$. By our construction of the compatible orderings of $\Sigma_d$ and $\Sigma_{d-1}$ from \cref{Def: Ordering of Simplices compatible with quotient/lift/group}, $\omega_{p_a-1+i}$ is always in the orbit of $\omega_{p_a}$, while $\psi_{q_b-1+j}$ is always in the orbit of $\psi_{q_b}$. More specifically, 
$\omega_{p_a-1+i} = (gG_{\omega_{p_a}})_i(\omega_{p_a}) = g_cG_{\ell(\omega'_a)}\ell(\omega'_a) = g_c\ell(\omega'_a)$, 
while 
$\psi_{q_b-1+j} = (gG_{\psi_{q_b}})_j(\psi_{q_b}) = g_{c'}G_{\ell(\psi'_b)}\ell(\psi'_b) = g_{c'}\ell(\psi'_b)$.

We first consider the case $g_c\ell(\omega'_a) \subseteq g_{c'}\ell(\psi'_b)$, which is equivalent to $\omega_{p_a-1+i}\subseteq\psi_{q_b-1+j}$. From previous paragraph, 
$\pi(\omega_{p_a-1+i})= \pi(\omega_{p_a}) = \pi(\ell(\omega'_a)) = \omega'_a$,
while 
$\pi(\psi_{q_b-1+j})= \pi(\psi_{q_b}) = \pi(\ell(\psi'_b)) = \psi'_b$. 
Hence by \cref{Lemma: Entries of Compatible Matrix},
$\IsotExp{d}_{ka-k+c,kb-k+c'} = [\partial_d]_{p_a-1+i,q_b-1+j} = [\partial'_d]_{a,b}$. 

We now consider the case $g_c\ell(\omega'_a) \nsubseteq g_{c'}\ell(\psi'_b)$.
Recall that $[\partial_d]$ is the matrix associated to $\partial_d$ with respect to the compatible ordered basis $([\psi_1],\dots,[\psi_{|\Sigma_d|}])$ and $([\omega_1],\dots,[\omega_{|\Sigma_{d-1}|}])$.
Note that if $\omega_{p_a-1+i} =  g_c\ell(\omega'_a) \nsubseteq g_{c'}\ell(\psi'_b) =\psi_{q_b-1+j}$, then $\partial_d([\psi_{q_b-1+j}])$ has no $[\omega_{p_a-1+i}]$ components, so $\IsotExp{d}_{ka-k+c,kb-k+c'} = [\partial_d]_{p_a-1+i,q_b-1+j}=0$.
\end{proof}

\section{\texorpdfstring{$G$}{G}-Boundary Matrix}\label{sec: G-Boundary Matrix}

\subsection{Isotropy Transfer Triple }\label{sec: Modified Algorithm}

In this section we will modify the framework of complexes of group to encode the algebraic information needed to compute homology. Our reformulation is based on the associated output triple recalled from \cite{CarboneNandaNaqvi}  in \cref{Def: Associated Complex and Morphism}.

Aside from the regular $G$-complex, an associated output triple also depends on an arbitrary lift and an arbitrary transfer. Our modification in \cref{Def: Associated Reduced Morphism} eliminates the transfer by replacing it with an extended transfer that is uniquely determined by the input complex and the lift. We also extend the domain so that the extended transfer can be defined on any pair of simplices, not just those that share a face relation. Finally the extended transfer can also be presented in a matrix format which will be used to define the $G$-boundary matrix in \cref{Def: G-Boundary Matrix}.

Given a regular $G$-complex, a lift $\ell$, and simplices $\omega',\psi'$ of $X$ such that $\omega'\subseteq \psi'$. By definition of a lift, $\pi(\ell(\psi')) =\psi'$. Since $\omega'\subseteq \psi'$, \cref{Prop: Unique simplex implied by face in quotient} implies that there exists a unique $\omega$ such that $\omega\subseteq \ell(\psi')$ and $\pi(\omega) =\omega'$. With this in mind, we define the following:

\begin{definition}\label{Def: Associated Reduced Morphism}Let $G$ be a finite group. An \textbf{isotropy transfer triple associated to $X$ and $\ell$} is the triple $(X/G,S_\ell,T_\ell^*)$ defined as follows: 
\begin{itemize}
\item[(1)] $Y = X/G$.
\item[(2)] $S: \Sigma' \rightarrow \{\text{subgroups of $G$}\}$ is a function, where for any simplex $\psi'$ of $X/G$, $S_\ell(\psi')= G_{\ell(\psi')}$.
\item [(3)] $T^*: \{(\psi',\omega') \mid \text{$\psi',\omega'\in \Sigma'$}, \dim(\psi')=\dim(\omega')+1\}\ \rightarrow \{\text{subset of $G$}\}$ is a function, where for any simplices $\psi',\omega'$ of $X/G$, $
T_\ell^*(\psi',\omega') = \begin{cases}
        \{g\in G\mid\omega = g(\ell(\omega'))\} & \text{if $\omega'\subseteq \psi'$,}
        \\
        \emptyset & \text{if $\omega'\nsubseteq \psi'$}.
        \end{cases}
$
\end{itemize}

\end{definition}

We call $S_\ell$ the \textbf{isotropy subgroup function} and $T_\ell^*$ the \textbf{extended transfer}. There also is an alternative description of $T_\ell^*$.

\begin{proposition}\label{Prop: Alternative definition of extended transfer} Let $X$ be a regular $G$-complex and $(X/G,S_\ell,T_\ell^*)$ the associated isotropy transfer triple. Then $T_\ell^*(\psi',\omega') = \{g\in G\mid g(\ell(\omega'))\subseteq \ell(\psi')\}$.
\end{proposition}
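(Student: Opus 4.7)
The plan is to prove the set equality by cases on whether $\omega' \subseteq \psi'$, using \cref{Prop: Unique simplex implied by face in quotient} as the main tool.

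First I would treat the case $\omega' \subseteq \psi'$. By \cref{Def: Associated Reduced Morphism}, $T_\ell^*(\psi',\omega') = \{g\in G \mid \omega = g(\ell(\omega'))\}$, where $\omega$ is the unique simplex with $\omega \subseteq \ell(\psi')$ and $\pi(\omega) = \omega'$ guaranteed by \cref{Prop: Unique simplex implied by face in quotient}. For the forward inclusion, if $g \in T_\ell^*(\psi', \omega')$, then $g\ell(\omega') = \omega \subseteq \ell(\psi')$, so $g$ lies in the right-hand set. For the reverse inclusion, suppose $g\ell(\omega') \subseteq \ell(\psi')$. Then applying $\pi$ gives $\pi(g\ell(\omega')) = \pi(\ell(\omega')) = \omega'$, so $g\ell(\omega')$ is a face of $\ell(\psi')$ whose image under $\pi$ is $\omega'$. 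The uniqueness clause of \cref{Prop: Unique simplex implied by face in quotient} forces $g\ell(\omega') = \omega$, i.e., $g \in T_\ell^*(\psi',\omega')$.

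Next I would handle the case $\omega' \nsubseteq \psi'$. Here $T_\ell^*(\psi',\omega') = \emptyset$ by definition, so I must show that no $g \in G$ satisfies $g\ell(\omega') \subseteq \ell(\psi')$. Suppose for contradiction such a $g$ exists. Since $\pi$ is a simplicial map (\cref{Prop: QuotientSimplicialComplex}), applying $\pi$ to $g\ell(\omega') \subseteq \ell(\psi')$ yields $\pi(g\ell(\omega')) \subseteq \pi(\ell(\psi')) = \psi'$. But $\pi$ is $G$-invariant on simplices, so $\pi(g\ell(\omega')) = \pi(\ell(\omega')) = \omega'$, giving $\omega' \subseteq \psi'$, contradicting our assumption. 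Hence the right-hand set is empty, matching $T_\ell^*(\psi',\omega')$.

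I do not anticipate a real obstacle; the proof is essentially an unwinding of the definition combined with the uniqueness statement in \cref{Prop: Unique simplex implied by face in quotient}. The only subtlety worth noting explicitly is why $\pi(g\ell(\omega')) = \pi(\ell(\omega'))$: this is because $g\ell(\omega')$ and $\ell(\omega')$ lie in the same $G$-orbit, and $\pi$ identifies $G$-orbits of simplices to single simplices in $X/G$. Once this is invoked, both directions of the equality fall out immediately.
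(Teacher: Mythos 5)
Your proposal is correct and follows essentially the same route as the paper's proof: both split into the cases $\omega' \subseteq \psi'$ and $\omega' \nsubseteq \psi'$, invoke the uniqueness clause of \cref{Prop: Unique simplex implied by face in quotient} for the first case, and derive a contradiction via $\pi$ preserving face relations for the second. The only difference is that you spell out the two set inclusions separately, which the paper compresses into a single ``if and only if.''
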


\begin{proof}

When $\omega'\subseteq \psi'$, by \cref{Prop: Unique simplex implied by face in quotient} there exists a unique face $\omega$ of $\ell(\psi')$ such that $\pi(\omega) =\omega'$, i.e. $\omega$ is the unique simplex that is a face of $\ell(\psi')$ and is also in the orbit of $\ell(\omega')$. Hence for all $g\in G$, $g(\ell(\omega'))\subseteq \ell(\psi')$ if and only if $g(\ell(\omega')) = \omega$, and $T_\ell^*(\psi',\omega') = \{g\in G\mid\omega = g(\ell(\omega'))\} = \{g\in G\mid g(\ell(\omega'))\subseteq \ell(\psi')\}$ as desired.

When $\omega' \nsubseteq\psi'$, we claim $\{g\in G\mid g(\ell(\omega'))\subseteq \ell(\psi')\}$ is empty. If not, since the quotient simplicial map $\pi$ preserves face relations, the existence of some $g\in G$ such that $g(\ell(\omega'))\subseteq \ell(\psi')$ implies $\omega' = \pi(\ell(\omega')) = \pi(g(\ell(\omega'))) \subseteq \pi (\ell(\psi')) = \psi'$. This is a contradiction.

Hence in either case the equation holds.
\end{proof}

Recall from \cref{Def: Associated Complex and Morphism} and \cref{Def: Associated Reduced Morphism} that $S_\ell(\omega')$ is the isotropy subgroup of $\ell(\omega')$. We introduce one more key property of $T_\ell^*$.

\begin{lemma}\label{Lemma: T* is coset}Let $X$ be a regular $G$-complex and $(X/G,S_\ell,T_\ell^*)$ the associated isotropy transfer triple. Let $\psi',\omega'$ be a pair of simplices of $X/G$ such that $\omega' \subseteq \psi'$ and $\dim \psi' = \dim \omega'+1$. Then $T_\ell^*(\psi',\omega')$ is non-empty and is a left coset of $S_\ell(\omega')$ in $G$.

\end{lemma}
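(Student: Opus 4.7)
The plan is to use the alternative description of $T_\ell^*(\psi',\omega')$ given in \cref{Prop: Alternative definition of extended transfer}, namely $T_\ell^*(\psi',\omega') = \{g\in G\mid g(\ell(\omega'))\subseteq \ell(\psi')\}$, together with the uniqueness statement in \cref{Prop: Unique simplex implied by face in quotient}, to pin down exactly which $g \in G$ lie in this set.

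First, I would establish non-emptiness. Since $\omega'\subseteq \psi'$, \cref{Prop: Unique simplex implied by face in quotient} applied to $\ell(\psi')$ yields a (unique) simplex $\omega \subseteq \ell(\psi')$ with $\pi(\omega) = \omega'$. But $\pi(\ell(\omega')) = \omega'$ as well, so $\omega$ and $\ell(\omega')$ lie in the same $G$-orbit; choose $g_0 \in G$ with $g_0 \ell(\omega') = \omega \subseteq \ell(\psi')$. Then $g_0 \in T_\ell^*(\psi',\omega')$.

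Next I would prove that $T_\ell^*(\psi',\omega') = g_0 \,G_{\ell(\omega')} = g_0\, S_\ell(\omega')$. The inclusion $g_0\, G_{\ell(\omega')} \subseteq T_\ell^*(\psi',\omega')$ is immediate: if $h \in G_{\ell(\omega')}$, then $(g_0 h) \ell(\omega') = g_0 \ell(\omega') \subseteq \ell(\psi')$, so $g_0 h \in T_\ell^*(\psi',\omega')$. For the reverse inclusion, suppose $g \in T_\ell^*(\psi',\omega')$. Then both $g\ell(\omega')$ and $g_0\ell(\omega')$ are faces of $\ell(\psi')$ whose images under $\pi$ equal $\omega'$. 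By the uniqueness clause of \cref{Prop: Unique simplex implied by face in quotient}, we must have $g\ell(\omega') = g_0\ell(\omega')$, so $g_0^{-1} g$ fixes $\ell(\omega')$ and lies in $G_{\ell(\omega')} = S_\ell(\omega')$. Hence $g \in g_0\, S_\ell(\omega')$, completing the proof.

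I do not anticipate a significant obstacle: the whole argument reduces to invoking the uniqueness of the face over $\omega'$ inside $\ell(\psi')$, which is \cref{Prop: Unique simplex implied by face in quotient}, and then translating that uniqueness into the standard coset characterization of a stabilizer orbit. The only conceptual care required is to ensure that we are using the alternative formulation of $T_\ell^*$ from \cref{Prop: Alternative definition of extended transfer} throughout, since that formulation expresses membership as a condition on the translate $g\ell(\omega')$ lying inside $\ell(\psi')$, which is precisely what interacts with the uniqueness and stabilizer arguments.
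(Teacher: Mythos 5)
Your proposal is correct and follows essentially the same route as the paper: both arguments obtain non-emptiness from the fact that the unique face $\omega\subseteq\ell(\psi')$ over $\omega'$ lies in the orbit of $\ell(\omega')$, and both identify $T_\ell^*(\psi',\omega')$ with $g_0\,S_\ell(\omega')$ via the stabilizer argument, the only cosmetic difference being that you phrase membership through \cref{Prop: Alternative definition of extended transfer} and re-invoke the uniqueness clause of \cref{Prop: Unique simplex implied by face in quotient}, whereas the paper works directly with the defining equation $g(\ell(\omega'))=\omega$ for the fixed unique face $\omega$. No gaps.
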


\begin{proof}

By definition $T_\ell^*(\psi',\omega') = \{g\in G\mid\omega = g(\ell(\omega'))\}$, where $\omega$ is the unique face of $\ell(\psi')$ such that $\pi(\omega) = \omega'$. To see that $\{g\in G\mid\omega = g(\ell(\omega'))\}$ is non-empty, note that $\pi(\omega) = \omega' =\pi(\ell(\omega'))$ by \cref{Def:Lift}. Hence $\omega$ and $\ell(\omega')$ are in the same orbit, so there exists some $g\in G$ such that $\omega= g(\ell(\pi(\omega)))$.

To see that $T_\ell^*(\psi',\omega')$ is a left coset of $S_\ell(\omega')$, we first recall from \cref{Def: Associated Reduced Morphism} that $S_\ell(\omega')$ is the isotropy subgroup of $\ell(\omega')$. Let $g_0$ be an arbitrary element of $T_\ell^*(\psi',\omega')$.  We claim that $T_\ell^*(\psi',\omega') = g_0 S_\ell(\omega')$. For any $g \in T_\ell(\psi',\omega')$, $g(\ell(\omega')) = \omega = g_0(\ell(\omega'))$, hence $g_0^{-1}g(\ell(\omega')) = \ell(\omega')$ and $g_0^{-1}g$ is in the isotropy subgroup $S_\ell(\omega')$ of $\ell(\omega')$. Therefore $g\in g_0 S_\ell(\omega')$. Conversely, for any $g\in g_0S_\ell(\omega')$, $g(\ell(\omega')) = g_0(\ell(\omega')) = \omega$, so $g\in T_\ell^*(\psi',\omega')$, as claimed.
\end{proof}

\subsection{\texorpdfstring{$G$}{G}-Boundary Matrix}

This section introduces the notion of a $G$-boundary matrix, which is our primary tool for reconstructing data about $X$ from $X/G.$ We start by introducing an auxiliary function that we compose with the extended transfer to produce an element of the group ring $\mathbb{F}G$. We will organize the output of this function into a matrix which we call a \emph{transfer matrix}. The $G$-boundary matrix will then  be defined as the entry-wise product of the transfer matrix with a suitable choice of boundary matrix of the quotient simplicial complex.

We begin with the definition of our auxiliary function.

\begin{definition} Let $\mathbb{F}$ be any field and $G$ be a finite group. We define the function $\bm{\sigma}_\mathbb{F}: \{\text{subsets of $G$}\}\rightarrow \mathbb{F}G$ by $A \mapsto \sum_{g\in A} g$.
\end{definition}

Given an isotropy transfer triple and a dimension $d$, we now define the corresponding transfer matrix.

\begin{definition}\label{Def: Transfer Matrix} Let $\mathbb{F}$ be a field, $X$ a regular $G$-complex, $\ell$ a lift, and $(X/G,S_\ell,T_\ell^*)$ the associated isotropy transfer triple. Let $(\psi'_1,\dots,\psi'_{n})$ and $(\omega'_1,\dots,\omega'_{m})$ respectively be orderings of the $d$-simplices and $(d-1)$-simplices of $X/G$. The \textbf{$d$-th transfer matrix in $\mathbb{F}$ (corresponding to $X$, $\ell$, and the orderings of the simplices of $X/G$)} is the matrix $[T_d]$ with entries in $\mathbb{F}G$ where $[T_d]_{i,j} :=\bm{\sigma}_\mathbb{F} (T_\ell^*(\psi'_i,\omega'_j))$. 

\end{definition}

Note that while $[T_d]$ depends on $\mathbb{F}$ and the orderings of the simplices, we have omitted them from the notation, as they will be clear from context.

We now construct a $G$-boundary matrix. Recall that an ordered simplicial basis of a $d$-th chain group automatically orders and orients all the $d$-simplices of a simplicial complex.

\begin{definition}\label{Def: G-Boundary Matrix} Let $\mathbb{F}$ be a field, $X$ a regular $G$-complex, $\ell$ a lift, and $(X/G,S_\ell,T_\ell^*)$ the associated isotropy transfer triple. Let $(\psi'_1,\dots,\psi'_{n})$ and $(\omega'_1,\dots,\omega'_{m})$ respectively be orderings of the $d$-simplices and $(d-1)$-simplices of $X/G$, and $([\psi'_1],\dots,[\psi'_{n}])$ and $([\omega'_1],\dots,[\omega'_{m}])$ the corresponding ordered simplicial bases of $C_d(X/G;\mathbb{F})$ and $C_{d-1}(X/G;\mathbb{F})$ respectively. Let $[T_d]$ be the corresponding $d$-th transfer matrix. Then the \textbf{$d$-th $G$-boundary matrix $\GBoundary{d}$ (corresponding to $X$, $\ell$, $([\psi'_1],\dots,[\psi'_{n}])$, and $([\omega'_1],\dots,[\omega'_{m}])$)} is the $(|\Sigma'_{d-1}|\times |\Sigma'_d|)$ matrix, where $\GBoundary{d}_{a,b} := [\partial'_d]_{a,b}[T_d]_{a,b}$.

\end{definition}

\section{Reconstructing homology of \texorpdfstring{$\mathbb{Z}_k$}{Zk}-complexes}
\label{sec: Rank Algorithm}

In this section we state and prove our main result, which completes the construction of a chain complex quasi-isomorphic to the standard simplicial chain complex for $X$ in the setting where $G = \mathbb{Z}_k$. First, we construct a function sending matrices over the group ring $\mathbb{F}G$ to matrices over the field $\mathbb{F}$. We use this function to relate the $G$-boundary matrix to the compatible isotropy expansion of the boundary matrix. Then, we demonstrate that, using this relationship, we can reconstruct the homology of $X$ from the chain complex of $X/G.$

Given a representation $\rho:G\rightarrow M(k,\mathbb{F})$, by considering $M(k,\mathbb{F})$ as an $\mathbb{F}$-linear space, we can extend $\rho$ linearly to the entire $\mathbb{F}G$. We will call such an extended function a \textbf{representation of $\mathbb{F}G$} and abuse notation to also denote it by $\rho$.

We also introduce our notation for the matrix blocks of interest. 

\begin{remark}\label{Rmk: Matrix Block and Block Indices}
Let $M$ be an $(mk \times nk)$-matrix. For any integer $a$, we denote the set of consecutive integers $\{(k-1)a+1, \dots, ka\}$ by $L_a$. We then denote by $M(a, b)$ the submatrix of $M$ whose rows and columns are indexed by $L_a$ and $L_b$, respectively. Note that $\{L_a\}_{a=1,\dots,m}$ and $\{L_b\}_{b=1,\dots,n}$ partition $\{1, \dots, mk\}$ and $\{1, \dots, nk\}$, respectively, and the block matrices $\{M(a, b)\}_{a=1,\dots,m; b=1,\dots,n}$ partition $M$. Thus, determining $M(a, b)$ for every $a$ and $b$ is equivalent to determining the matrix $M$.
\end{remark}

We then extend a representation of $\mathbb{F}G$ to any matrix over $\mathbb{F}G$.

\begin{definition}\label{Def: Entry by Entry Representation of Matrix}
Let $G$ be an ordered finite group, $|G|=k$, $\mathbb{F}$ a field, and $\rho:\mathbb{F}G\rightarrow M(k,\mathbb{F})$ a representation of $\mathbb{F}G$. For positive integers $m,n$, let $A$ be an $(m\times n)$ matrix with entries in $\mathbb{F}G$. Then define the function $[\rho]_{mn}:M(m\times n, \mathbb{F}G)\rightarrow M(mk\times nk, \mathbb{F})$ by applying $\rho$ element-wise; that is, $A\mapsto [\rho]_{mn} A$ where $[\rho]_{mn} A$ is the $(mk\times nk)$ matrix defined by $([\rho]_{mn} A)(a,b) := \rho(A_{a,b}),$ for $a=1\dots, m$ and $b=1,\dots,n$. We call $[\rho]_{mn}$ the \textbf{matrix extension} of the representation $\rho$. 
\end{definition}

When the context is clear we  omit the subscripts, writing $[\rho]$ for $[\rho]_{mn}$. We observe some additional properties of the matrix form of a representation.

\begin{proposition}\label{Proposition: Entry-by-Entry Rep is Ring Hom} Let $\rho: G \to M(k, \mathbb{F})$ be a representation of $\mathbb{F}G$. Then $[\rho]$ preserves matrix addition and multiplication. Moreover, if $\rho$ is injective, then $[\rho]$ is also injective. In this case, $[\rho]$ maps invertible matrices to invertible matrices, that is,  $[\rho]$ preserves inverses.

\end{proposition}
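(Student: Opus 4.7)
The plan is to verify each of the four claims directly from the block-wise definition of $[\rho]$ in \cref{Def: Entry by Entry Representation of Matrix}, using that $\rho: \mathbb{F}G \to M(k,\mathbb{F})$ is a unital $\mathbb{F}$-algebra homomorphism (it is $\mathbb{F}$-linear by construction and multiplicative on $G$, hence on all of $\mathbb{F}G$). The bookkeeping for everything hinges on identifying the $(a,b)$-block of $[\rho](A)$ with $\rho(A_{a,b})$ using the partition $\{L_a\}$ from \cref{Rmk: Matrix Block and Block Indices}.

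First I would handle additivity. For $m\times n$ matrices $A, B$ over $\mathbb{F}G$ and any block index $(a,b)$,
\[
\bigl([\rho](A+B)\bigr)(a,b) = \rho(A_{a,b}+B_{a,b}) = \rho(A_{a,b})+\rho(B_{a,b}) = \bigl([\rho](A)+[\rho](B)\bigr)(a,b),
\]
where the middle equality uses $\mathbb{F}$-linearity of $\rho$. For multiplicativity, let $A$ be $m\times n$ and $B$ be $n\times p$ over $\mathbb{F}G$. Then
\[
\bigl([\rho](AB)\bigr)(a,c) \;=\; \rho\!\left(\sum_{b=1}^{n} A_{a,b}B_{b,c}\right) \;=\; \sum_{b=1}^{n} \rho(A_{a,b})\,\rho(B_{b,c}),
\]
using linearity and multiplicativity of $\rho$. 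On the other side, because the partitions $\{L_a\}$ turn $[\rho](A)$ and $[\rho](B)$ into genuine block matrices with $k\times k$ blocks, standard block multiplication gives $\bigl([\rho](A)\cdot[\rho](B)\bigr)(a,c) = \sum_{b=1}^{n}\rho(A_{a,b})\,\rho(B_{b,c})$, which matches.

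For injectivity, if $[\rho](A) = [\rho](B)$ then $\rho(A_{a,b}) = \rho(B_{a,b})$ for every $(a,b)$, so injectivity of $\rho$ forces $A_{a,b}=B_{a,b}$ entry-wise, hence $A=B$. For preservation of inverses, I would first show that $[\rho]$ sends the $m\times m$ identity matrix $I$ over $\mathbb{F}G$ to the $mk\times mk$ identity matrix over $\mathbb{F}$. This reduces to $\rho(1_{\mathbb{F}G}) = I_k$ (true since $\rho$ restricts to a group homomorphism $G\to GL_k(\mathbb{F})$ sending $e$ to $I_k$) and $\rho(0)=0$ (by $\mathbb{F}$-linearity). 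Then if $A$ is invertible with inverse $A^{-1}$, applying $[\rho]$ to $AA^{-1}=A^{-1}A=I$ and using the already-established multiplicativity yields $[\rho](A)\,[\rho](A^{-1}) = [\rho](A^{-1})\,[\rho](A) = I$, so $[\rho](A)$ is invertible with inverse $[\rho](A^{-1})$.

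I do not expect a genuine obstacle here: the proposition is essentially the statement that entry-wise application of a unital ring homomorphism on the coefficient ring induces a unital ring homomorphism between matrix rings. The only place a reader might get stuck is recognizing that the block-wise definition of $[\rho]$ is compatible with the usual block-matrix multiplication rule; once that identification is made, each claim collapses to a one-line calculation. Note also that injectivity is not strictly required for the inverse-preserving assertion, but it is stated in the same breath because it is the hypothesis under which $[\rho]$ actually embeds $M(m\times m,\mathbb{F}G)$ into $M(mk\times mk,\mathbb{F})$.
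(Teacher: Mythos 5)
Your proposal is correct and follows essentially the same route as the paper: additivity, multiplicativity, and injectivity by direct block-wise computation, and inverse-preservation by showing $[\rho]$ sends the identity matrix over $\mathbb{F}G$ to $I_{mk}$ and then applying multiplicativity to $AA^{-1}=I$. The only cosmetic difference is that you justify $\rho(1_{\mathbb{F}G})=I_k$ via the fact that a representation sends $e$ to $I_k$, whereas the paper invokes injectivity of $\rho$ for the same conclusion; your observation that injectivity is not actually needed for this step is accurate.
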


\begin{proof}

The fact that $[\rho]$ preserves matrix addition and matrix multiplication, and is injective when $\rho$ is injective, follows directly from the entry-wise application of $\rho.$

To show that $[\rho]$ preserves inverses, let $A$ be an $(n\times n)$ invertible matrix. Then there exists $A^{-1}$ such that $AA^{-1}$ is the identity matrix in $M(n, \mathbb{F}G)$. Note that multiplicative identity of $\mathbb{F}G$ is $1_\mathbb{F}\,e$, where $1_\mathbb{F}$ and $e$ are the (multiplicative) identity elements of $\mathbb{F}$ and $G$ respectively.

Since $\rho$ is an injective ring homomorphism, it maps the identity to the identity, and $1_\mathbb{F}\,e$ is mapped to the identity matrix $I_k$ of $M(k,\mathbb{F})$. Then $[\rho](AA^{-1})$ is the block diagonal $(nk\times nk)$ matrix, whose diagonal blocks are all $I_k$. In other words $[\rho](AA^{-1})$ is the identity matrix $I_{nk}$. Then $([\rho]A)([\rho]A^{-1}) = [\rho](AA^{-1}) = I_{nk}$, hence $[\rho]A$ is invertible and $([\rho]A)^{-1} = [\rho](A^{-1})$.
\end{proof}

We now consider the special case where $G$ is a finite cyclic group $\mathbb{Z}_k$. We will construct a representation $\rho_\alpha$ of $\mathbb{F}\mathbb{Z}_k$ using the observations in \cref{subsec: Circulant Matrix and Group Rings}, and then applying \cref{Def: Entry by Entry Representation of Matrix} to $\rho_\alpha$.

We first introduce some notation. Fix a generator $\alpha\in \mathbb{Z}_k$ and use it to construct an ordering $(e,\alpha, \dots , \alpha^{k-1})$ of $\mathbb{Z}_k$. By \cref{Prop: Cyclic gives Circulant} each element $w = \sum_{i=0}^{n-1} a_i \alpha^i \in \mathbb{F}\mathbb{Z}_k$ corresponds to a circulant matrix $M(\mathbb{F}\mathbb{Z}_k, w)$. Then, we define the following.

\begin{definition}\label{Def: Representation of FZk} Let $\mathbb{F}$ be a field and $\alpha$ a generator of $\mathbb{Z}_k$. The \textbf{representation of $\mathbb{Z}_k$ corresponding to $\alpha$} is the function $\rho_{\alpha}: \mathbb{F}\mathbb{Z}_k\rightarrow M(n,\mathbb{F})$ given by $w\mapsto M(\mathbb{F}\mathbb{Z}_p,w)^T$.

\end{definition}
In particular, for any $w =  \sum_{i=0}^{n-1} a_{i}\alpha^i$, $\rho_\alpha(w) = \CircMatrix{(a_0,a_1,\dots,a_{n-1})}^T$.

To see that $\rho_\alpha$ is indeed a representation of $\mathbb{Z}_k$, we first apply \cref{Thm: RG embeds to Matrix} to the ordering $(e,\alpha, \dots , \alpha^{k-1})$ of $\mathbb{Z}_k$ to obtain ring homomorphism $\rho: w\mapsto M(\mathbb{F}\mathbb{Z}_k,w)$. As $\mathbb{F}\mathbb{Z}_k$ is commutative,  for any $w_1,w_2\in \mathbb{F}\mathbb{Z}_k$,
\begin{align*}\rho_\alpha(w_1 w_2) &= \rho_\alpha(w_2 w_1) = M(\mathbb{F}\mathbb{Z}_k,w_2 w_1)^T
= (\rho(w_2w_1))^T\\& = (\rho(w_2)\rho(w_1))^{T}
= \rho(w_2)^T\rho(w_1)^{T} = \rho_\alpha(w_1)\rho_\alpha(w_2).\end{align*}
As the transpose of a circulant matrix is also circulant, the image of $\rho_\alpha$ is still the set of all $\mathbb{F}\mathbb{Z}_k$-matrices.

We will use the above defined representation of $\mathbb{Z}_k$ corresponding to $\alpha$ for the remainder of this article. Applying \cref{Def: Entry by Entry Representation of Matrix} to $\rho_\alpha$ gives us a map $[\rho_\alpha]$ from matrices over $\mathbb{F}\mathbb{Z}_k$ to larger matrices over $\mathbb{F}$. We will use $[\rho_\alpha]$ to relate the $G$-boundary matrix and the compatible isotropy expansion of the boundary matrix. 

\begin{lemma}
\label{Lemma: Expansion equals Representation}
Let $X$ be a regular $\mathbb{Z}_k$-complex and $\ell$ a lift. Let $([\psi'_1],\dots,[\psi'_{n}])$ and $([\omega'_1],\dots,[\omega'_{m}])$ be respectively an ordered simplicial basis of $C_d(X/\mathbb{Z}_k;\mathbb{F})$ and $C_{d-1}(X/\mathbb{Z}_k;\mathbb{F})$. Let $\rho_\alpha$ be the representation of $\mathbb{Z}_k$ corresponding to $\alpha$. Let $\GBoundary{d}$ be the corresponding $\mathbb{Z}_k$-boundary matrix and $\IsotExp{d}$ the compatible isotropy expansion of $[\partial_d]$. Then $$\IsotExp{d} = [\rho_\alpha]\GBoundary{d}\,.$$
\end{lemma}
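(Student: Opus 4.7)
The plan is to verify the equation block by block using the partition from Remark 5.1. Both sides are $(mk \times nk)$ matrices partitioned into $k \times k$ blocks indexed by pairs $(a, b) \in \{1, \dots, m\} \times \{1, \dots, n\}$, and since the matrix extension $[\rho_\alpha]$ is defined entry-wise, it suffices to prove for every such $(a,b)$ that
\[
\IsotExp{d}(a, b) \;=\; \rho_\alpha\bigl(\GBoundary{d}_{a,b}\bigr) \;=\; [\partial'_d]_{a,b}\cdot \rho_\alpha\bigl(\bm{\sigma}_\mathbb{F}(T_\ell^*(\psi'_b, \omega'_a))\bigr),
\]
pulling the scalar $[\partial'_d]_{a,b}$ out using $\mathbb{F}$-linearity of $\rho_\alpha$.

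First I would dispatch the case $\omega'_a \not\subseteq \psi'_b$: here $[\partial'_d]_{a,b}=0$ and $T_\ell^*(\psi'_b,\omega'_a) = \emptyset$, so the right block vanishes. For the left block, any occurrence of $g_c \ell(\omega'_a) \subseteq g_{c'}\ell(\psi'_b)$ would, under $\pi$, force $\omega'_a \subseteq \psi'_b$, so Lemma 3.15 forces every entry to be zero as well.

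For the non-trivial case $\omega'_a \subseteq \psi'_b$, I would apply Lemma 4.3 to write $T_\ell^*(\psi'_b, \omega'_a) = g_0 H$ with $H := G_{\ell(\omega'_a)}$, where $g_0 \in G$ is chosen so that $g_0 \ell(\omega'_a)$ is the unique face of $\ell(\psi'_b)$ over $\omega'_a$ guaranteed by Proposition 2.10. Then $\bm{\sigma}_\mathbb{F}(T_\ell^*(\psi'_b, \omega'_a)) = \sum_{h \in H} g_0 h$. The core calculation is to identify the $(c, c')$-entry of $\rho_\alpha\bigl(\sum_{g \in g_0 H} g\bigr)$. Writing $g_j = \alpha^{j-1}$, Theorem 2.16, Corollary 2.17, and Definition 5.3 imply that $\rho_\alpha(g_j)_{c, c'} = 1$ precisely when $g_c g_{c'}^{-1} = g_j$, and $0$ otherwise. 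Summing and using linearity, the $(c, c')$-entry of $\rho_\alpha\bigl(\sum_{g \in g_0 H} g\bigr)$ is $1$ if $g_c g_{c'}^{-1} \in g_0 H$ and $0$ otherwise.

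I would then close the loop with Lemma 3.15. Because $\mathbb{Z}_k$ is abelian, $g_c g_{c'}^{-1} \in g_0 H$ is equivalent to $g_c \in g_{c'} g_0 H$, which in turn is equivalent to $g_c \ell(\omega'_a) = g_{c'}\bigl(g_0 \ell(\omega'_a)\bigr) \subseteq g_{c'}\ell(\psi'_b)$. Hence $\rho_\alpha(\GBoundary{d}_{a,b})_{c,c'}$ equals $[\partial'_d]_{a,b}$ exactly when $g_c \ell(\omega'_a) \subseteq g_{c'}\ell(\psi'_b)$ and vanishes otherwise, matching the $(ka-k+c, kb-k+c')$-entry of $\IsotExp{d}$ from Lemma 3.15 on the nose. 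The main obstacle will be keeping the many layers of indexing consistent: the ordering of $\mathbb{Z}_k$ by powers of $\alpha$, the compatible ordering of cosets $G/H$ that governs the compatible ordering of $\Sigma_d$, the circulant-matrix convention from Definition 2.14, and the transpose in Definition 5.3. Once these conventions are lined up, the verification is a direct unpacking of definitions.
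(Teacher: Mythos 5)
Your proposal is correct and follows essentially the same route as the paper's proof: reduce to the $k\times k$ blocks indexed by $(a,b)$, use \cref{Lemma: Entries of Isotrope Expansion} to describe the entries of $\IsotExp{d}(a,b)$ via the face condition $g_c\ell(\omega'_a)\subseteq g_{c'}\ell(\psi'_b)$, and match this against $\rho_\alpha$ applied to $[\partial'_d]_{a,b}\bm{\sigma}_\mathbb{F}(T_\ell^*(\psi'_b,\omega'_a))$ using \cref{Prop: Alternative definition of extended transfer}. The only cosmetic difference is that the paper exhibits both blocks as the common circulant matrix $\CircMatrix{(v_1,\dots,v_k)}^T$ and compares columns, whereas you compute the $(c,c')$-entry of $\rho_\alpha(\sum_{g\in T_\ell^*}g)$ directly from Hurley's formula $M_{i,j}=a_{g_i^{-1}g_j}$ together with the coset description of $T_\ell^*$ from \cref{Lemma: T* is coset}; both verifications are sound.
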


\begin{proof}

Let $\IsotRed{X}{d}$ and $\IsotRed{X}{d-1}$ respectively be the $d$-th and $(d-1)$-th corresponding isotropy index reducing function of $X$ from \cref{Def: Isotrope Index Reducing Function}. Let $(X/\mathbb{Z}_k,S_\ell,T_\ell^*)$ be the associated isotropy transfer triple of \cref{Def: Associated Reduced Morphism}. Finally, let $\{\IndicesOfOrbit{X}{d-1}{a}\}_{a=1,\dots,m}$ and $\{\IndicesOfOrbit{X}{d}{b}\}_{b=1,\dots,n}$ respectively be the corresponding $(d-1)$-th and $d$-th lifted partition from \cref{Def: Partition Index by Orbits}.

Recall that $\IsotExp{d}$ and $[\rho_\alpha]\GBoundary{d}$ are both $(mk\times nk)$ matrices. Following \cref{Rmk: Matrix Block and Block Indices}, it suffices to prove that $\IsotExp{d}(a,b) = ([\rho_\alpha]\GBoundary{d})(a,b)$ for every $a =1,\dots,m$ and $b=1,\dots,n$. By \cref{Def: Entry by Entry Representation of Matrix}, $([\rho_\alpha][\partial'_d]^{S_\ell,T})(a,b) = \rho_\alpha([\partial'_d]^{S_\ell,T}_{a,b})$. Hence we have reduced our goal to proving $\IsotExp{d}(a,b) = \rho_\alpha([\partial'_d]^{S_\ell,T}_{a,b})\, .$

We claim that 
$\IsotExp{d}(a,b) = \CircMatrix{(v_1,\dots,v_k)}^T$,
where $v_c = [\partial'_d]_{a,b}$ if $\alpha^{c-1} \in T_\ell^*(\psi'_b, \omega'_a)$, and $v_c = 0$ otherwise. For simplicity of notation, we will take the subscript index of $v$ modulo $k$, due to the fact that $\alpha^k = e$. We will prove our claim by demonstrating that each column of the two matrices is identical. Recall from \cref{Rmk: Matrix Block and Block Indices} that $L_a=\{(k-1)a+1, \dots, ka\}$ for any integer $a$. 
By considering the indices in the sets $L_a$ and $L_b$, for $c,c' = 1,\dots,k$, we obtain that $(\IsotExp{d}(a,b))_{c,c'} = \IsotExp{d}_{ka-k+c,kb-k+c'}$. On the other hand, by \cref{Def: Circulant Matrix}, the $c'$-th column of $\CircMatrix{v_1,\dots,v_k}^T$ is $(\mathcal{S}^{c'-1}(v_1, \dots, v_k))^T$, where $\mathcal{S}$ is the shift operator of \cref{Def: Circulant Matrix}.

For $c' = 1,\dots, k$, consider the $c'$-th column $(\IsotExp{d}_{ka-k+1,kb-k+c'},\dots,\IsotExp{d}_{ka,kb-k+c'})^T$ of $\IsotExp{d}(a,b)$. By \cref{Lemma: Entries of Isotrope Expansion},
$\IsotExp{d}_{ka-k+c,kb-k+c'} = [\partial'_d]_{a,b}$
whenever 
$\alpha^{c-1}\ell(\omega'_a) = g_c\ell(\omega'_a) \subseteq g_{c'}\ell(\psi'_b) = \alpha^{c'-1}\ell(\psi'_b)$,
and is $0$ otherwise. By \cref{Prop: Alternative definition of extended transfer}, $T_\ell^*(\psi'_b,\omega'_a) = \{g\in G\mid g(\ell(\omega'_a))\subseteq \ell(\psi'_b)\}$. Hence $\alpha^{c-1}\ell(\omega'_a) \subseteq \alpha^{c'-1}\ell(\psi'_b)$ if and only if
$ \alpha^{c-c'}\ell(\omega')\subseteq \ell(\psi'_b)$ if and only if $\alpha^{c-c'}\in T_\ell^*(\psi'_b,\omega'_a)$. 

Therefore, when \(\alpha^{c-1}\ell(\omega'_a) \subseteq \alpha^{c'-1}\ell(\psi'_b)\), $\alpha^{c-c'}\in T_\ell^*(\psi'_b,\omega'_a)$ and \(\IsotExp{d}_{ka-k+c,kb-k+c'} = [\partial'_d]_{a,b} = v_{c-c'+1}\). When \(\alpha^{c}\ell(\omega'_a) \nsubseteq \alpha^{c'}\ell(\psi'_b)\), $\alpha^{c-c'}\notin T_\ell^*(\psi'_b,\omega'_a)$ and \(\IsotExp{d}_{ka-k+c,kb-k+c'} = 0 = v_{c-c'+1}\). In both cases, \(\IsotExp{d}_{ka-k+c,kb-k+c'} = v_{c-c'+1}\). 
Consequently, the \(c'\)-th column of \(\IsotExp{d}(a,b)\) is given by \((v_{2-c'}, v_{3-c'}, \dots, v_{k+1-c'})^T = (\mathcal{S}^{c'-1}(v_1, \dots, v_k))^T\). This shows that \(\IsotExp{d}(a,b) = \CircMatrix{(v_1, \dots, v_k)}^T\), as we claimed.

It then suffices to prove $\rho_\alpha([\partial'_d]^{S_\ell,T}_{a,b}) = \CircMatrix{(v_1,\dots,v_k)}^T$ also.  \cref{Def: G-Boundary Matrix} and \cref{Def: Transfer Matrix} imply that 
$[\partial'_d]^{S_\ell,T}_{a,b}= [\partial'_d]_{a,b}[T_d]_{a,b}$,
where
$[T_d]_{a,b} = \sum_{g\in T_\ell^*(\psi'_b,\omega'_a)} g = \sum_{\alpha^{c-1}\in T_\ell^*(\psi'_b,\omega'_a)} \alpha^{c-1}.$
Since $\rho_\alpha$ is a ring homomorphism, we then obtain that
$\rho_\alpha([\partial'_d]^{S_\ell,T}_{a,b}) = [\partial'_d]_{a,b}\sum_{\alpha^{c-1}\in T_\ell^*(\psi'_b,\omega'_a)} \rho_{\alpha}(\alpha^{c-1}) $. 
Note that $\rho_{\alpha}(\alpha^{c-1}) = \CircMatrix{e_c}^T,$ where $\{e_c\}_{c=1,\dots k}$ is the standard basis of $\mathbb{F}^k$. Finally, linearity of the map $\CircMatrix{\cdot}$ and our definition of $v_c$ yield 
\begin{align*}    
\rho_\alpha([\partial'_d]^{S_\ell,T}_{a,b}) 
&= {[\partial'_d]_{a,b}}\sum\limits_{\alpha^{c-1}\in T_\ell^*(\psi'_b,\omega'_a)} \,\CircMatrix{e_c}^T\\
&= \sum\limits_{\alpha^{c-1}\in T_\ell^*(\psi'_b,\omega'_a)} {[\partial'_d]_{a,b}}\,\CircMatrix{e_c}^T 
+ \sum\limits_{\alpha^{c-1}\notin T_\ell^*(\psi'_b,\omega'_a)} 0\,\CircMatrix{e_c}^T\\
&= \sum\limits_{\alpha^{c-1}\in T_\ell^*(\psi'_b,\omega'_a)} v_c\,\CircMatrix{e_c}^T
+ \sum\limits_{\alpha^{c-1}\notin T_\ell^*(\psi'_b,\omega'_a)} v_c\,\CircMatrix{e_c}^T\\
&= \sum\limits_{c=1,\dots, k} v_c\,\CircMatrix{e_c}^T
= \CircMatrix{v_1,\dots,v_k}^T =\IsotExp{d}(a,b),\end{align*}
as desired.
\end{proof}

\medskip

We now consider an $(m\times n)$ matrix, $M$, over a ring $R$. We say that \textbf{$M$ has a Smith normal form} $D$ if there exist invertible matrices $P$ and $Q$ such that $PMQ = D$, where $D$ is a diagonal matrix with diagonal entries $(a_1, \dots, a_r, 0, \dots, 0)$, with $a_i \in R$ satisfying $a_i \mid a_{i+1}$ for $1 \leq i \leq r-1$. The following result is summarized from \cite{SNFInCombinatorics}.

\begin{theorem}\label{Thm: Principal Ideal Ring has SNF} Let $M$ be a matrix over a ring $R$.  If every ideal of $R$ is principal, then $M$ has a Smith normal form.
\end{theorem}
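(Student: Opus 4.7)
The plan is to prove this classical theorem by induction on $\min(m,n)$, where $M$ is $m\times n$, building up a pair of invertible matrices $P$ and $Q$ one diagonal block at a time via elementary row and column operations. The base case $\min(m,n)=0$ or $M=0$ is immediate, so I focus on the inductive step: it suffices to show that by invertible row and column operations we can bring $M$ into the block form
\[
M \longmapsto \begin{pmatrix} d & 0 \\ 0 & M' \end{pmatrix},
\]
where $d$ divides every entry of the $(m-1)\times(n-1)$ matrix $M'$. Then applying the induction hypothesis to $M'$ gives a Smith normal form $D'$ of $M'$ whose first diagonal entry $a_1$ is a multiple of $d$ (since row and column operations preserve the ideal generated by all entries, and $d$ divides everything in $M'$), so $\mathrm{diag}(d, D')$ is a Smith normal form of $M$.

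The first key step is a Bezout-style lemma: for any $a,b \in R$, since $(a,b)$ is principal, write $(a,b)=(d)$ with $d=xa+yb$ and $a=pd$, $b=qd$. Then the $2\times 2$ matrix $U=\bigl(\begin{smallmatrix} x & y \\ -q & p \end{smallmatrix}\bigr)$ satisfies $U\bigl(\begin{smallmatrix} a \\ b \end{smallmatrix}\bigr) = \bigl(\begin{smallmatrix} d \\ 0 \end{smallmatrix}\bigr)$, and one checks $\det U = xp+yq$ is a unit (in the nonzero case, $d(xp+yq)=d$ forces $xp+yq=1$ when $d$ is a nonzerodivisor; the zerodivisor case in a general PIR can be handled by a small perturbation argument or by treating such $d$ separately). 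Embedding $U$ into an $m\times m$ identity matrix gives an invertible elementary operation zeroing the $b$ entry at the cost of replacing $a$ by the gcd $d$. A symmetric statement holds for columns.

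The second key step is the iterative reduction of the $(1,1)$ corner. After a row/column swap putting a nonzero entry in position $(1,1)$, I repeatedly apply the Bezout lemma down the first column to clear it (replacing $M_{11}$ by the gcd of the first column), then across the first row to clear it (replacing $M_{11}$ by the gcd of the first row). If after this the $(1,1)$ entry does not divide some $M_{ij}$ in the interior, I add row $i$ to row $1$ to reintroduce a nondivisible entry into the first row and repeat. Each full pass strictly enlarges the principal ideal $(M_{11})$: each replacement of $M_{11}$ by a gcd sends it to a proper divisor whenever the gcd is a proper divisor, i.e.\ whenever a nondivisible entry was present.

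The main obstacle, and the place where the hypothesis that $R$ is a principal ideal ring is essential, is showing that the iterative procedure terminates. I would invoke the fact that every principal ideal ring is Noetherian (the ascending chain condition on principal ideals is part of the PIR condition via finite generation), so the strictly ascending chain of ideals $(M_{11}^{(0)}) \subsetneq (M_{11}^{(1)}) \subsetneq \cdots$ produced by successive passes must stabilize. When it stabilizes, the $(1,1)$ entry divides every other entry, the first row and column are zero, and the induction hypothesis finishes the proof.
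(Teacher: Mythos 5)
The paper does not actually prove this statement; it cites it from \cite{SNFInCombinatorics} (the underlying result is Kaplansky's theorem that commutative principal ideal rings are elementary divisor rings), so there is no in-paper argument to compare against. Your skeleton is the standard one for PIDs, and most of it is sound: the block-induction, the fact that row and column operations preserve the ideal generated by all entries (so the divisibility chain propagates), and the termination argument (a ring in which every ideal is principal is Noetherian, so the strictly ascending chain of ideals $(M_{11})$ stabilizes). The genuine gap is exactly the point you flag and then defer: the invertibility of the Bezout block $U=\left(\begin{smallmatrix} x & y \\ -q & p \end{smallmatrix}\right)$. From $d=xa+yb$, $a=pd$, $b=qd$ you only get $d\,(1-xp-yq)=0$, which forces $\det U=xp+yq$ to be a unit only when $d$ is a nonzerodivisor. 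This cannot be set aside as a degenerate case here, because the ring the paper actually needs, $\mathbb{F}\mathbb{Z}_k\cong\mathbb{F}[x]/\langle x^k-1\rangle$ with $k\geq 2$, always has zero divisors, so essentially every gcd step lands in the problematic situation. Showing that in a commutative principal ideal ring the witnesses $x,y,p,q$ can always be corrected so that $xp+yq$ is a unit (i.e., that such a ring is an Hermite ring) is the real content of the theorem and requires an argument you have not supplied --- for instance via the Zariski--Samuel decomposition of a commutative principal ideal ring as a finite product of PIDs and special local Artinian principal ideal rings, over each of which your reduction does go through.

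If your goal is only the instance the paper uses, there is a shorter route that sidesteps the issue entirely: lift a matrix over $\mathbb{F}[x]/\langle x^k-1\rangle$ to a matrix $M$ over the PID $\mathbb{F}[x]$, compute $PMQ=D$ there by the classical argument, and reduce everything modulo $x^k-1$. The images of $P$ and $Q$ remain invertible because their determinants are nonzero constants, hence units in the quotient, and the divisibility conditions on the diagonal entries are preserved by the ring homomorphism. Either supply the Hermite-ring argument for general principal ideal rings or restrict the claim to quotients of PIDs and use the lifting argument; as written, the $2\times 2$ step is not justified in the presence of zero divisors.
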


In particular, if $\mathbb{F}$ is a field and $\mathbb{Z}_k$ a finite cyclic group, then any matrix over $\mathbb{F}\mathbb{Z}_k$ has a Smith normal form. This follows from the fact that $\mathbb{F}[x]$ is a principal ideal ring and $\mathbb{F}[x]\rightarrow \mathbb{F}[x]/\langle x^k-1\rangle \cong \mathbb{F}\mathbb{Z}_k$ is a quotient map. 

We can now state and prove our main theorem.

\begin{theorem}\label{Thm: Main} Let $\mathbb{F}$ be any field, $X$ be a regular $\mathbb{Z}_k$-complex, $\ell$ be any lift to $X$, and $d$ a non-negative integer. Let $\partial_d$ be the $d$-th boundary map in the chain complex $C_\ast(X; \mathbb{F}).$ Let $([\psi'_0],\dots,[\psi'_{n}])$ and $([\omega'_0],\dots,[\omega'_{m}])$ respectively be any $d$-th and a $(d-1)$-th ordered simplicial basis of $X/\mathbb{Z}_k$ and $[\partial'_d]^{S_\ell,T}$ be the $d$-th $G$-boundary matrix $[\partial_d']^{S_\ell,T}$ corresponding to $X$, $\ell$, $([\psi'_1],\dots,[\psi'_{n}])$, and $([\omega'_0],\dots,[\omega'_{m}]).$ Let $D$ be a smith normal form of $[\partial'_d]^{S_\ell,T}$. Let $\alpha$ be any generator of $\mathbb{Z}_k$ and $\rho_\alpha$ the representation of $\mathbb{Z}_k$ corresponding to $\alpha$. Then $\rank (\partial_d) = \sum\limits_{i} \rank (\rho_\alpha(D_{i,i}))$.
\end{theorem}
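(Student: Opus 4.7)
The plan is to chain together the three main tools developed earlier: the identity $\IsotExp{d} = [\rho_\alpha]\GBoundary{d}$ from \cref{Lemma: Expansion equals Representation}, the rank-preservation statement $\rank \IsotExp{d} = \rank [\partial_d]$ from \cref{Lemma: Isotrope Expansion Preserve Rank}, and the fact that $[\rho_\alpha]$ sends invertible matrices over $\mathbb{F}\mathbb{Z}_k$ to invertible matrices over $\mathbb{F}$ (\cref{Proposition: Entry-by-Entry Rep is Ring Hom}). The Smith normal form $D$ supplies invertible matrices $P, Q$ over $\mathbb{F}\mathbb{Z}_k$ with $P\,\GBoundary{d}\,Q = D$, which exists by \cref{Thm: Principal Ideal Ring has SNF} since $\mathbb{F}\mathbb{Z}_k$ is a quotient of the principal ideal ring $\mathbb{F}[x]$.

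First I would apply the matrix extension $[\rho_\alpha]$ to both sides of $P\,\GBoundary{d}\,Q = D$. Because $[\rho_\alpha]$ preserves matrix multiplication by \cref{Proposition: Entry-by-Entry Rep is Ring Hom}, this yields
\[
[\rho_\alpha] P \cdot [\rho_\alpha] \GBoundary{d} \cdot [\rho_\alpha] Q \;=\; [\rho_\alpha] D.
\]
Since $\rho_\alpha$ is injective (its image consists of all circulant matrices, and on the ordered basis $(e,\alpha,\dots,\alpha^{k-1})$ the map is a ring isomorphism onto that image), the same proposition tells us $[\rho_\alpha]P$ and $[\rho_\alpha]Q$ are invertible, so multiplication by them preserves rank. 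Therefore
\[
\rank\bigl([\rho_\alpha]\GBoundary{d}\bigr) \;=\; \rank\bigl([\rho_\alpha] D\bigr).
\]

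Next I would identify the left-hand side with $\rank(\partial_d)$ by \cref{Lemma: Expansion equals Representation} and \cref{Lemma: Isotrope Expansion Preserve Rank}, giving $\rank\bigl([\rho_\alpha]\GBoundary{d}\bigr) = \rank \IsotExp{d} = \rank [\partial_d] = \rank(\partial_d)$. For the right-hand side, since $D$ is a diagonal matrix, its image under $[\rho_\alpha]$ is block-diagonal with diagonal blocks $\rho_\alpha(D_{i,i})$ (the off-diagonal blocks are $\rho_\alpha(0) = 0$). The rank of a block-diagonal matrix is the sum of the ranks of its diagonal blocks, so
\[
\rank\bigl([\rho_\alpha] D\bigr) \;=\; \sum_i \rank\bigl(\rho_\alpha(D_{i,i})\bigr).
\]
Combining the two equalities gives the desired conclusion $\rank(\partial_d) = \sum_i \rank\bigl(\rho_\alpha(D_{i,i})\bigr)$.

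There is no serious obstacle: the heavy lifting has already been done in \cref{Lemma: Expansion equals Representation} (relating the two matrices), \cref{Lemma: Isotrope Expansion Preserve Rank} (passing between $[\partial_d]$ and its expansion), and \cref{Proposition: Entry-by-Entry Rep is Ring Hom} (showing $[\rho_\alpha]$ respects invertibility). The only mild subtlety is to verify carefully that $[\rho_\alpha]$ applied to a diagonal matrix over $\mathbb{F}\mathbb{Z}_k$ is block-diagonal with the stated diagonal blocks, which is immediate from \cref{Def: Entry by Entry Representation of Matrix} together with $\rho_\alpha(0) = 0$.
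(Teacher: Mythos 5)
Your proposal is correct and follows essentially the same route as the paper's own proof: chain \cref{Lemma: Isotrope Expansion Preserve Rank} and \cref{Lemma: Expansion equals Representation} to identify $\rank(\partial_d)$ with $\rank([\rho_\alpha]\GBoundary{d})$, then apply $[\rho_\alpha]$ to the Smith normal form decomposition, invoke \cref{Proposition: Entry-by-Entry Rep is Ring Hom} to preserve invertibility of $P$ and $Q$, and sum the ranks of the diagonal blocks $\rho_\alpha(D_{i,i})$ of the block-diagonal matrix $[\rho_\alpha]D$. No gaps.
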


As computing homology over a field requires only the ranks of the boundary maps, \cref{Thm: Main} then immediately implies that we can reconstruct the vector space $H_d(X; \mathbb{F})$ from $X/G$ and an associated isotropy transfer triple. In particular, the reconstruction does not depend on the choice of the lift $\ell$ or the ordered simplicial basis $([\psi'_1],\dots,[\psi'_{n}])$ and $([\omega'_0],\dots,[\omega'_{m}])$. 

\begin{proof}

Let $[\partial_d]$ be the matrix compatible with $([\psi'_1],\dots,[\psi'_{n}])$, $([\omega'_0],\dots,[\omega'_{m}])$, $\ell$, and $(e,\alpha,\dots,\alpha^{k-1})$
(see \cref{Def: Associated Matrix Compatible with Quotient}). By \cref{Lemma: Isotrope Expansion Preserve Rank} and \cref{Lemma: Expansion equals Representation}, $\rank ([\partial_d]) = \rank(\IsotExp{d}) = \rank ([\rho_\alpha][\partial'_d]^{S_\ell,T})$. Hence it suffices to compute the rank of $[\rho_\alpha][\partial'_d]^{S_\ell,T}$, where $[\rho_\alpha]$ is as defined in  \cref{Def: Entry by Entry Representation of Matrix}.

By \cref{Proposition: Entry-by-Entry Rep is Ring Hom}, $[\rho_\alpha]$ preserves matrix multiplication. Moreover $\rho_\alpha$ is injective, since it is the composition of the matrix transpose and the embedding of \cref{Thm: RG embeds to Matrix}. Hence $[\rho_\alpha]$ is also injective and sends invertible matrices to invertible matrices. Let $D= P[\partial'_d]^{S_\ell,T}Q$ be a Smith normal form decomposition of $[\partial'_d]^{S_\ell,T}$. Then $[\rho_\alpha]D = ([\rho_\alpha]P)([\rho_\alpha][\partial'_d]^{S_\ell,T})([\rho_\alpha]Q)$. Since $[\rho_\alpha]P$ and $[\rho_\alpha]Q$ are both invertible, 
$\rank ([\rho_\alpha]D)= \rank ([\rho_\alpha][\partial'_d]^{S_\ell,T})$.

Observe that the $\rho_\alpha(D_{i,i})$ are the diagonal blocks of the block diagonal matrix $[\rho_\alpha]D$. Hence $\rank ([\rho]D) = \sum_i \rank \rho(d_i)$. Combining these observations,  we have $$\sum_i \rank \rho(d_i) = \rank ([\rho][\partial'_d]^{S_\ell,T}) = \rank ([\partial_d]) = \rank (\partial_d)$$ as desired.
\end{proof}

\section{Conclusion}

In this paper, we introduce a new approach to computing the homology of regular simplicial \( G \)-complexes with field coefficients by using their corresponding quotient spaces. To facilitate this approach, we expand upon the complex of groups framework introduced in \cite{BridsonHaefliger} and reformulated by \cite{CarboneNandaNaqvi}. Inspired by representation theory, we define \( G \)-boundary matrices in \cref{Def: G-Boundary Matrix}, which compress the algebraic information of the boundary maps of a $G$-complex. Restricting to \( G=\mathbb{Z}_k \), we ensure that a \( G \)-boundary matrix,  matrix over the group ring \( \mathbb{F}G \), admits a Smith normal form. In \cref{Thm: Main}, we demonstrate how to compute the ranks of the boundary maps of a \( G \)-complex from the Smith normal form of its \( G \)-boundary matrices. In principle, this approach can be applied in other contexts, however to our knowledge it remains an open problem for which fields \(\mathbb{F}\) and groups \(G\) the necessary Smith normal form to support these computations exists.

Our work presents an alternative approach to computing homology in the context of a group action that has the potential to be more efficient in terms of both time and memory by compressing the matrices that must be reduced. Additionally, we introduce a novel perspective by examining equivariant compression, previously combinatorially conceived through the theory of complexes of groups, algebraically through the lens of representation theory. We hope this will contribute to the development of a broader theoretical framework for homology computations in the context of group actions.

The tools and insights developed in this paper suggest a number of avenues for further work. Here we highlight a few that we find particularly interesting:

\begin{itemize}

\item Our work relies on the assumption that a simplicial \( G \)-complex is regular. While regularity can always be achieved by performing a barycentric subdivision twice on any simplicial \( G \)-complex, this process does not lend itself to efficient computation. This issue can be partially addressed by performing barycentric subdivision only on key simplices. Additionally, we are currently exploring ways to relax the current regularity condition to a weaker condition that can be achieved by performing barycentric subdivision just once, resulting in a less rigid combinatorial structure on the quotient complex.

\item We have computed the homology only in the case where $G$ is a finite cyclic group $\mathbb{Z}_k$. We first relied on the fact that $\mathbb{Z}_k$ is an abelian group. $\mathbb{F}\mathbb{Z}_k$ is then commutative, which we utilized to construct the necessary representation to relate a \( G \)-boundary matrix to the corresponding boundary map. If $G$ is non-abelian, we would need to modify the way we construct the representation. Secondly, we used the fact every matrix over $\mathbb{F}\mathbb{Z}_k$ admits Smith normal form. This is a significant challenge even for other finite abelian groups, as even for seemingly simple examples such as \( \mathbb{F} = \mathbb{F}_2 \) and \( G = \mathbb{Z}_2 \times \mathbb{Z}_2 \), matrices over \( \mathbb{F}G \) do not always admit a Smith normal form. This is interesting in contrast to the combinatorial case considered in \cite{CarboneNandaNaqvi}, where such considerations do not arise.

\end{itemize}

\bibliographystyle{amsplain}

\end{document}